\providecommand{\U}[1]{\protect\rule{.1in}{.1in}}
\newtheorem{theorem}{Theorem}[section]
\newtheorem{proposition}[theorem]{Proposition}
\newtheorem{corollary}[theorem]{Corollary}
\newtheorem{lemma}[theorem]{Lemma}
\theoremstyle{definition}
\newtheorem{problem}[theorem]{Problem}
\begin{document}
\title[Unified Grothendieck's and Kwapie\'{n}'s theorems for multilinear operators]{Unified Grothendieck's and Kwapie\'{n}'s theorems for multilinear operators}
\author[D. N\'u\~nez]{Daniel N\'{u}\~{n}ez-Alarc\'{o}n}
\address{Departamento de Matem\'{a}ticas\\
\indent Universidad Nacional de Colombia\\
\indent111321 - Bogot\'a, Colombia}
\email{danielnunezal@gmail.com and dnuneza@unal.edu.co}
\author[J. Santos]{Joedson Santos}
\address{Departamento de Matem\'{a}tica \\
Universidade Federal da Para\'{\i}ba \\
58.051-900 - Jo\~{a}o Pessoa, Brazil.}
\email{joedson.santos@academico.ufpb.br}
\author[D. Serrano]{Diana Serrano-Rodr\'{\i}guez}
\address{Departamento de Matem\'{a}ticas\\
\indent Universidad Nacional de Colombia\\
\indent111321 - Bogot\'{a}, Colombia}
\email{dmserrano0@gmail.com and diserranor@unal.edu.co}
\thanks{J. Santos is supported by CNPq and Grant 2019/0014 Para\'{\i}ba State Research
Foundation (FAPESQ)}
\subjclass[2010]{}
\keywords{Grothendieck's theorem; Kwapie\'{n}'s theorem; Multilinear operators; Sequence spaces}

\begin{abstract}
Kwapie\'{n}'s theorem asserts that every continuous linear operator from
$\ell_{1}$ to $\ell_{p}$ is absolutely $\left(  r,1\right)  $-summing for
$1/r=1-\left\vert 1/p-1/2\right\vert .$ When $p=2$ it recovers the famous
Grothendieck's theorem. In this paper investigate multilinear variants of
these theorems and related issues. Among other results we present a unified
version of Kwapie\'{n}'s and Grothendieck's results that encompasses the cases
of multiple summing and absolutely summing multilinear operators.

\end{abstract}
\maketitle

\section{Introduction}

Let $E,F$ be Banach spaces and $r\geq s\geq1$ be real numbers. A continuous
linear operator $T:E\rightarrow F$ is absolutely $\left(  r,s\right)
$-summing if $\left(  T(x_{j})\right)  _{j=1}^{\infty}\in\ell_{r}(F)$ whenever
$\left(  x_{j}\right)  _{j=1}^{\infty}\in\ell_{s}^{w}(E),$ where $\ell_{s}%
^{w}(E)$ denotes the space of weakly $s$-summable sequences in $E$, i.e., the
sequences $(x_{j})_{j=1}^{\infty}$ in $E$ such that%
\[
\left\Vert (x_{j})_{j=1}^{\infty}\right\Vert _{w,p}:=\sup_{\varphi\in
B_{E^{\ast}}}\left(  \sum\limits_{j=1}^{\infty}\left\vert \varphi
(x_{j})\right\vert ^{p}\right)  ^{1/p}<\infty.
\]
One of the cornerstones of the theory of absolutely summing operators is
Grothendieck's theorem, which asserts that every continuous linear operator
from $\ell_{1}$ to $\ell_{2}$ is absolutely $\left(  1,1\right)  $-summing. In
\cite{K}, Kwapie\'{n} extended Grothendieck's theorem replacing $\ell_{2}$ by
$\ell_{p}$ as follows: every continuous linear operator from $\ell_{1}$ to
$\ell_{p}$ is absolutely $\left(  r,1\right)  $-summing, with%
\begin{equation}
1/r=1-\left\vert 1/p-1/2\right\vert , \label{rrr}%
\end{equation}
and this result is optimal (see also \cite{bennett}). In the last decades the
notion of absolutely summing operators was extended to the multilinear and
nonlinear setting in several different lines of research (see \cite{blocos,
bay111, bo1, defant} and \cite{diestel} for the linear theory). In this paper
we shall be interested in the notions of absolutely summing multilinear
operators and multiple summing multilinear operators (for the precise
definitions, see Section 2).

The extension of Kwapie\'{n}'s theorem to multilinear operators is a natural
problem to be investigated. For multiple summing operators, an immediate
consequence of \cite[Corollary 4.3]{nachr} is that every continuous $m$-linear
operator from $\ell_{1}$ to $\ell_{p}$ is multiple $\left(  r,1\right)
$-summing, with $r$ as in (\ref{rrr}) and this result is sharp. For absolutely
summing multilinear operators, as proved in \cite{bayart}, every continuous
$m$-linear operator from $\ell_{1}$ to $\ell_{p}$ is absolutely $\left(
r,1\right)  $-summing for%
\begin{equation}
r=\left\{
\begin{array}
[c]{c}%
\frac{2p}{mp+2p-2},\text{ if }1\leq p\leq2\\
\frac{2p}{mp+2},\text{ if }2\leq p\leq\infty.
\end{array}
\right.  \label{sss}%
\end{equation}
However, the optimality of the estimates (\ref{sss}) is not proven. Our first
result shows that for $2\leq p\leq\infty$ the above estimate is sharp.

The following variant of Kwapie\'{n}'s theorem was proved in \cite{defant}:

\begin{theorem}
[see \cite{defant}]\label{dd11}Let $T\in\mathcal{L}\left(  ^{m}\ell_{1}%
;\ell_{p}\right)  $ and $A_{k}\in\mathcal{L}\left(  ^{n}\ell_{\infty};\ell
_{1}\right)  $ for all $k=1,...,m.$ The composition $T\left(  A_{1}%
,...,A_{m}\right)  $ is multiple $\left(  r,1\right)  $-summing for%
\[
r=\left\{
\begin{array}
[c]{c}%
\frac{2n}{n+2-\frac{2}{p}},\text{ if }1\leq p\leq2\\
\frac{2n}{p+1},\text{ if }2\leq p\leq\frac{2n}{n-1}\\
2,\text{ if }\frac{2n}{n-1}\leq p\leq\infty.
\end{array}
\right.
\]

\end{theorem}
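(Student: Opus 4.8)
The plan is to realize the $mn$-linear composition $S:=T(A_1,\dots,A_m)\colon \ell_\infty^{mn}\to\ell_p$ as an interplay between two named inequalities and to test multiple summability on the canonical basis. Because the domain is an $\ell_\infty$-space and the left index is $1$, the unit vectors $e_i$ of $c_0$ (which have weak $\ell_1$-norm equal to $1$) suffice to control the multiple $(r,1)$-summing norm, so the whole statement reduces to the coefficient estimate
\[
\Big(\sum_{\mathbf i_1,\dots,\mathbf i_m}\big\|T\big(A_1(e_{\mathbf i_1}),\dots,A_m(e_{\mathbf i_m})\big)\big\|_p^{\,r}\Big)^{1/r}\le C\,\|T\|\prod_{k=1}^m\|A_k\|,
\]
where $\mathbf i_k=(i_{k1},\dots,i_{kn})$ and $e_{\mathbf i_k}=(e_{i_{k1}},\dots,e_{i_{kn}})$. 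Writing $b^k_{\mathbf i_k}:=A_k(e_{\mathbf i_k})\in\ell_1$, the two workhorses are: (i) by the multilinear Kwapie\'n theorem (the consequence of \cite{nachr} recalled in the Introduction), $T$ is multiple $(s,1)$-summing with $1/s=1-|1/p-1/2|$; and (ii) for each $\varphi\in B_{\ell_\infty}$ the form $\varphi\circ A_k$ is a bounded $n$-linear form on $c_0$, so the Bohnenblust--Hille inequality gives $\big(\sum_{\mathbf i_k}|\varphi(b^k_{\mathbf i_k})|^{2n/(n+1)}\big)^{(n+1)/2n}\le C\|A_k\|$. In particular each array $(b^k_{\mathbf i_k})$ is weakly $\tfrac{2n}{n+1}$-summable in $\ell_1$, uniformly in $\varphi$ and in $k$.

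The regime $2\le p\le\infty$ I would treat by \emph{absorbing} the weak $\tfrac{2n}{n+1}$-summability of the inputs into $T$. Feeding the arrays $(b^k_{\mathbf i_k})$ through the multiple $(s,1)$-summing map $T$ and invoking the inclusion theorem for multiple summing operators upgrades $(s,1)$-summability to $(r,q)$-summability with $q=\tfrac{2n}{n+1}$ and $\frac1r=\frac1s-\bigl(1-\frac1q\bigr)=\frac1s-\frac{n-1}{2n}$. Inserting Kwapie\'n's value $1/s=\frac12+\frac1p$ yields $\frac1r=\frac1p+\frac1{2n}$, that is $r=\frac{2np}{2n+p}$. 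This is genuinely better than $2$ precisely while $p\le\frac{2n}{n-1}$ (where the relation forces $1/r\ge 1/2$), it equals $2$ exactly at the threshold $p=\frac{2n}{n-1}$, and for $p\ge\frac{2n}{n-1}$ one falls back on the value $r=2$; taking the minimum therefore produces the stated second and third lines, with the endpoints matching continuously.

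The remaining regime $1\le p\le2$ is where I expect the real work, since there the inclusion step above is too lossy (at $p=1$ it would yield $r=2n$ rather than the correct $r=2$). Here one must keep the full anisotropic information: instead of collapsing $(b^k_{\mathbf i_k})$ to a single weakly summable array, I would retain the mixed $(1,2,\dots,2)$-type Hardy--Littlewood norms of the forms $\varphi\circ A_k$ and interlock them slot by slot with Kwapie\'n's inequality for $T$, using Minkowski's integral inequality to interchange the outer $\ell_s$-summation with the inner $\ell_2$-sums and a generalized H\"older inequality to recombine the $mn$ indices. The bookkeeping is designed so that the outer index $1/s$ is divided by $n$ and spread over the $n$ inner slots of each $A_k$, which is the signature of the target relation $\frac1r=\frac1n\cdot\frac1s+\frac{n-1}{2n}$. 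With $1/s=\frac32-\frac1p$ this gives $\frac1r=\frac{n+2-2/p}{2n}$, i.e.\ $r=\frac{2n}{n+2-2/p}$, the first line of the statement; note $r=2$ at $p=1$ and $r=\frac{2n}{n+1}$ at $p=2$, matching the Bohnenblust--Hille value shared with the second line.

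The main obstacle, then, is this anisotropic mixed-norm estimate for $p\le2$: organizing the Minkowski and H\"older steps so that the exponents balance exactly to $\frac1n\cdot\frac1s+\frac{n-1}{2n}$ without losing sharpness, and verifying the saturation at $r=2$ for large $p$. Everything else is bookkeeping and endpoint matching: the three prescriptions glue continuously at $p=2$ and at $p=\frac{2n}{n-1}$, no optimality claim is required since the theorem is a one-sided estimate, and the emerging constant $C$ is simply a product of the Kwapie\'n constant for $T$ and the ($n$-dependent) Bohnenblust--Hille constants of the $A_k$.
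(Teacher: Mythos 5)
Your two preliminary steps are sound: the reduction to canonical vectors is legitimate (because the class $\{T(A_1,\dots,A_m)\}$ is stable under composing each slot with operators in $\mathcal{L}(c_0;\ell_\infty)$, which is exactly what identifies weakly $1$-summable sequences in $\ell_\infty$), and Bohnenblust--Hille does make each array $\bigl(A_k(e_{\mathbf i_k})\bigr)_{\mathbf i_k}$ weakly $\tfrac{2n}{n+1}$-summable in $\ell_1$. Note also that this paper never proves Theorem \ref{dd11} --- it is quoted from \cite{defant} --- so the only in-paper relative is Section 5, which runs on vector-valued complex interpolation following \cite{bayart}. The genuine gap is at the central step of the one case you actually execute, $2\le p\le\infty$: you upgrade ``$T$ is multiple $(s,1)$-summing'' to ``$T$ is multiple $(r,q)$-summing'' with $q=\tfrac{2n}{n+1}$ and $\tfrac1r=\tfrac1s-\bigl(1-\tfrac1q\bigr)$, calling this ``the inclusion theorem for multiple summing operators.'' That is the \emph{linear} inclusion formula. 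The inclusion theorem for multiple summing $m$-linear operators (Theorem \ref{9990} above, from \cite{PSDianaE}) carries the penalty multiplied by the degree: from $(s;1)$ one gets $(t;q)$ only with $\tfrac1t=\tfrac1s-m\bigl(1-\tfrac1q\bigr)$, and only under the admissibility condition $q<\tfrac{ms}{ms-1}$, which here is equivalent to $s<\tfrac{2n}{m(n-1)}$ and hence fails outright whenever $m(n-1)\ge 2n$ (recall $s\ge1$), e.g.\ for every $m\ge4$. Even where it applies, the correct formula gives $\tfrac1r=\tfrac12+\tfrac1p-m\tfrac{n-1}{2n}$, an $m$-dependent exponent far weaker than the $m$-free one in the statement. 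A penalty-free upgrade of the kind you assert is not a formal consequence of multiple $(s,1)$-summability; such improvements exist only as nontrivial theorems under extra hypotheses (cotype, scalar targets; cf.\ \cite{bayart2}), and producing the one needed here is precisely the hard content of \cite{defant} and \cite{bayart} (coordinatewise summability, respectively interpolation between $\ell_2(\ell_\infty)$- and $\ell_u(\ell_2)$-valued estimates, as reproduced in Section 5). So your $p\ge2$ case does not stand as written.

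Two further problems. The case $1\le p\le2$ is not a proof at all: you describe intentions (mixed Hardy--Littlewood norms, Minkowski, a generalized H\"older step ``designed'' to produce $\tfrac1r=\tfrac1n\cdot\tfrac1s+\tfrac{n-1}{2n}$) and you yourself flag the key estimate as the main obstacle, so there is nothing to verify. Finally, the exponent your $p\ge2$ computation targets, $r=\tfrac{2np}{2n+p}$, is not the printed middle line $r=\tfrac{2n}{p+1}$: the two agree only when $p^{2}=2n$, so ``taking the minimum produces the stated second and third lines'' is false as written. In fairness, the printed middle line is almost certainly a misprint for $\tfrac{2n}{\frac{2n}{p}+1}=\tfrac{2np}{2n+p}$ --- as printed it matches the first line at $p=2$ only when $n=2$, never equals $2$ at $p=\tfrac{2n}{n-1}$, and for $n=2,p=4$ even drops below $1$ --- so your target is very likely the intended statement; but a careful solution should have flagged this discrepancy rather than assert that the formulas coincide.
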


This result was recently improved in \cite{bayart} when $2\leq p\leq\infty$
and the authors also investigated the case of absolutely summing multilinear operators:

\begin{theorem}
[see \cite{bayart}]\label{dd22}Let $T\in\mathcal{L}\left(  ^{m}\ell_{1}%
;\ell_{p}\right)  $ and $A_{k}\in\mathcal{L}\left(  ^{n}\ell_{\infty};\ell
_{1}\right)  $ for all $k=1,...,m.$

(A) The composition $T\left(  A_{1},...,A_{m}\right)  $ is multiple $\left(
r,1\right)  $-summing for%
\[
r=\left\{
\begin{array}
[c]{c}%
\frac{2n}{n+2-\frac{2}{p}},\text{ if }1\leq p\leq2\\
\frac{2n}{n+\frac{2}{p}},\text{ if }2\leq p\leq\infty.
\end{array}
\right.
\]

(B) Assume that $n\geq2.$ The composition $T\left(  A_{1},...,A_{m}\right)  $
is absolutely $\left(  r,2\right)  $-summing for%
\[
r=\left\{
\begin{array}
[c]{c}%
\frac{2p}{mp+2p-2},\text{ if }1\leq p\leq2\\
\frac{2p}{mp+2},\text{ if }2\leq p\leq\infty.
\end{array}
\right.
\]

\end{theorem}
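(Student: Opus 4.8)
The plan is to realise both statements as the composition of an outer Kwapie\'{n}-type estimate for $T\colon\ell_{1}^{m}\to\ell_{p}$, which is already available from the results recalled above, with an inner Grothendieck-type estimate for the maps $A_{k}\colon\ell_{\infty}^{n}\to\ell_{1}$, the latter exploiting that $\ell_{1}$ has cotype $2$ and that the domain is an $\mathcal{L}_{\infty}$-space. Throughout set $U=T(A_{1},\dots,A_{m})$, and in each block $k$ write $y^{(k)}=A_{k}(x^{(k,1)},\dots,x^{(k,n)})\in\ell_{1}$, so that $U(x^{(1,1)},\dots,x^{(m,n)})=T(y^{(1)},\dots,y^{(m)})$; the whole problem then reduces to controlling the images $y^{(k)}$ in the appropriate sequence space. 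The two parts differ precisely in \emph{which} summability of the $A_{k}$ is used: part (B) uses a diagonal estimate, while part (A) uses a multiple-summing one.

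For part (B) I would first pass to the scalar level in the inner factors: for every $\psi\in B_{\ell_{\infty}}$ the map $\psi\circ A_{k}$ is a bounded $n$-linear form on $\ell_{\infty}$ of norm at most $\|A_{k}\|$, whence $\|(y^{(k)}_{j})_{j}\|_{w,1}=\sup_{\psi}\sum_{j}|(\psi\circ A_{k})(x^{(k,1)}_{j},\dots,x^{(k,n)}_{j})|$. The key lemma is then the diagonal Grothendieck inequality: for $n\geq2$ every bounded $n$-linear form $\phi$ on $\ell_{\infty}$ satisfies $\sum_{j}|\phi(x^{1}_{j},\dots,x^{n}_{j})|\leq C\|\phi\|\prod_{l=1}^{n}\|(x^{l}_{j})_{j}\|_{w,2}$, i.e.\ is absolutely $(1,2)$-summing. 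For $n=2$ this is exactly Grothendieck's theorem in sequence form; the case $n\geq3$ is obtained from it by an induction on the number of variables, and $n\geq2$ is precisely the threshold at which the inequality holds (it is false for $n=1$, which is why that hypothesis appears). Granting the lemma, each block yields $\|(y^{(k)}_{j})_{j}\|_{w,1}\leq C\prod_{l}\|(x^{(k,l)}_{j})_{j}\|_{w,2}$, and inserting the sequences $(y^{(k)}_{j})_{j}$ into the absolutely $(r,1)$-summing estimate for $T$ from $(\ref{sss})$ produces precisely the absolutely $(r,2)$-summing inequality claimed, with $r$ depending on $m$ and $p$ through $T$ while $n$ disappears from the exponent.

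For part (A) I would keep Kwapie\'{n} at the outer level but in multiple-summing form: by the consequence of \cite[Corollary 4.3]{nachr} recalled above, $T$ is multiple $(\rho,1)$-summing with $1/\rho=1-|1/p-1/2|$. The inner estimate is now the multilinear little Grothendieck theorem: an $n$-linear operator from $\ell_{\infty}$ into the cotype-$2$ space $\ell_{1}$ is multiple $(2,1)$-summing, giving $\bigl(\sum_{\mathbf{j}}\|y^{(k)}_{\mathbf{j}}\|_{1}^{2}\bigr)^{1/2}\leq C\prod_{l}\|(x^{(k,l)}_{j})_{j}\|_{w,1}$. The obstacle is that this is a \emph{strong} $\ell_{2}(\ell_{1})$ bound on the inner images, whereas the multiple $(\rho,1)$-summing estimate for $T$ asks for \emph{weakly} $1$-summable inputs, so the two estimates do not chain directly; this mismatch is exactly what forces the $n$-dependent exponent. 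I would bridge it through the coefficient description of $T$: writing $T=(T^{(i)})_{i}$ with scalar $m$-linear forms $T^{(i)}$ on $\ell_{1}$, boundedness of $T$ is equivalent to $\sup_{\mathbf{a}}\bigl\|(T^{(i)}(e_{a_{1}},\dots,e_{a_{m}}))_{i}\bigr\|_{\ell_{p}}\leq\|T\|$. Feeding the inner $\ell_{2}$-bounds into this $\ell_{p}$-controlled tensor and estimating by an anisotropic (mixed-norm) Hardy--Littlewood inequality together with Khinchin's inequality should deliver $1/r=1/2+1/(np)$ for $2\leq p\leq\infty$ and $1/r=1/2+(1-1/p)/n$ for $1\leq p\leq2$, which are the stated values; the summand $1/2$ comes from the cotype-$2$/Khinchin step and the $\ell_{p}$-contribution is diluted across the $n$ inner variables of each block, which is why $m$ is absent.

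The main obstacle is the sharp exponent bookkeeping in part (A): the anisotropic Hardy--Littlewood step must interpolate between the inner $\ell_{2}$-estimate and the outer $\ell_{p}$-estimate while distributing the $\ell_{p}$-summability over the $n$ inner variables, and recovering exactly the stated exponent there is the delicate point. In part (B) the work is concentrated instead in the multilinear Grothendieck lemma and in pinning down that $n\geq2$ is what makes it available. As consistency checks I would verify the endpoints $p=\infty$ (where $r=2$), $p=2$ (where $r=2n/(n+1)$, the Bohnenblust--Hille exponent of scalar $n$-linear forms on $\ell_{\infty}$), and $m=1$ (where both parts collapse to Kwapie\'{n}'s theorem).
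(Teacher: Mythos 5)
Your part (B) is essentially correct, and it is in substance the argument that the literature (and this paper, in its Section 5 variants) actually uses. Note first that Theorem \ref{dd22} is not proved in this paper at all: it is quoted from \cite{bayart}, and the paper only reuses its proof template later (``we proceed as in the proof of \cite[Theorem 2.3]{bayart}''). Your (B) matches that template: for each $\psi\in B_{\ell_\infty}$ the form $\psi\circ A_k$ is a bounded $n$-linear form on $\ell_\infty$, and for $n\geq2$ such forms are absolutely $(1;2)$-summing (multilinear Grothendieck; this is \cite{bombal}, or \cite[Theorem 3.15]{complu} with $s=1$, which is exactly the ingredient the paper invokes in its final theorem), so each $A_k$ sends weakly $2$-summable sequences to weakly $1$-summable sequences uniformly; chaining with the absolutely $(r,1)$-summing estimate (\ref{sss}) for $T$ gives precisely the claimed $(r,2)$-summability, and your diagnosis of why $n\geq2$ is the exact threshold (failure of $(1;2)$-summability of functionals) is right.

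Part (A) is where you have a genuine gap. Your endpoint observations are correct: every $A_k$ is multiple $(2,1)$-summing into the cotype-$2$ space $\ell_1$, and since $\sum_{\mathbf{j}_1,\dots,\mathbf{j}_m}\|T(y^{(1)}_{\mathbf{j}_1},\dots,y^{(m)}_{\mathbf{j}_m})\|^2\leq\|T\|^2\prod_k\sum_{\mathbf{j}_k}\|y^{(k)}_{\mathbf{j}_k}\|_1^2$ by the pointwise bound and Fubini, the composition is multiple $(2,1)$-summing for \emph{every} $p$ (this settles $p=1$ and $p=\infty$); at $p=2$ the Bohnenblust--Hille inequality applied to the forms $\psi\circ A_k$ makes each $A_k$ \emph{weakly} multiple $(\frac{2n}{n+1},1)$-summing, which does chain with $T$ being multiple $(\frac{2n}{n+1},\frac{2n}{n+1})$-summing, giving $r=\frac{2n}{n+1}$ there. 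But for intermediate $p$ your bridge --- expanding $T$ in coefficients and invoking ``an anisotropic Hardy--Littlewood inequality together with Khinchin'' --- is never executed: the exponents are asserted (``should deliver''), not derived, and it is far from clear that this coefficient-level route controls the relevant sums, since the inputs $y^{(k)}_{\mathbf{j}_k}$ are arbitrary vectors of $\ell_1$ and expanding them against $T(e_{a_1},\dots,e_{a_m})$ produces exactly the sums whose estimation is the whole difficulty. The actual mechanism in \cite{bayart} (and in Section 5 here) is complex interpolation on the target space in the style of \cite{complu}: from the two endpoint coincidences one interpolates $\ell_p=[\ell_1,\ell_2]_\theta$ (resp.\ $[\ell_\infty,\ell_2]_\theta$) to get $\frac1r=\frac{1-\theta}{2}+\theta\cdot\frac{n+1}{2n}$, which is exactly $\frac12+\frac{1-1/p}{n}$ for $1\leq p\leq2$ and $\frac12+\frac1{np}$ for $2\leq p\leq\infty$. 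Without this interpolation step, or a genuine substitute for it, your sketch proves (A) only at $p\in\{1,2,\infty\}$.
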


Note that while (A) provides $r$ so that $T\left(  A_{1},...,A_{m}\right)  $
is multiple $\left(  r,1\right)  $-summing, note that (B) provides $r$ so that
$T\left(  A_{1},...,A_{m}\right)  $ is absolutely $\left(  r,2\right)
$-summing. However, since every continuous $m$-linear operator $T\in
\mathcal{L}\left(  ^{m}\ell_{\infty};\ell_{1}\right)  $ is multiple $\left(
s;s\right)  $-summing for every $s\geq2$ (see \cite[Corollary 4.10]{complu})
it is obvious that the composition $T\left(  A_{1},...,A_{m}\right)  $ is
multiple $\left(  s,s\right)  $-summing for all $s\geq2$ and this result is
optimal in the sense that one cannot improve $\left(  s,s\right)  $ to
$\left(  r,s\right)  $ for $r<s.$ Thus, in the context of multiple summing
operators the nontrivial problem seems to be:

\begin{problem}
\label{p112233}Given $1\leq s<2,$ and positive integers $m,n,$ what is the
best $r$ so that the composition $T\left(  A_{1},...,A_{m}\right)  $ is
multiple $\left(  r,s\right)  $-summing for every $T\in\mathcal{L}\left(
^{m}\ell_{1};\ell_{p}\right)  $ and $A_{k}\in\mathcal{L}\left(  ^{n}%
\ell_{\infty};\ell_{1}\right)  $ for $k=1,...,m$?
\end{problem}

The paper is organized as follows. In Section 2 we present some preliminary
concepts and results which shall be used throughout the paper. In Section 3 we
prove that the estimate (\ref{sss}) provided by Bayart, Pellegrino and Rueda
in \cite{bayart} is optimal when $2\leq p\leq\infty$. In Section 4 we prove
Kwapie\'{n}'s and Grothendieck's inequalities for blocks and, finally, in
Section 5 we prove variants of Theorems \ref{dd11} and \ref{dd22}, providing a
partial answer to Problem \ref{p112233}.

\section{Background and notation}

Henceforth $\mathbb{K}$ represents the field of all scalars (complex or real),
$E,E_{1},...,E_{m},F$ denote Banach spaces over $\mathbb{K}$ and the Banach
space of all bounded $m$-linear operators from $E_{1}\times\cdots\times E_{m}$
to $F$ is denoted by $\mathcal{L}(E_{1},...,E_{m};F)$ and we endow it with the
classical sup norm (when $E_{1}=\cdots=E_{m}=E$ we write $\mathcal{L}%
(^{m}E;F)$ instead of $\mathcal{L}(E_{1},\cdots,E_{m};F)$). The topological
dual of $E$ is denoted by $E^{\ast}$ and its closed unit ball is denoted by
$B_{E^{\ast}}.$ Throughout the paper, for $p\in\lbrack1,\infty]$, the symbol
$p^{\ast}$ denotes the conjugate of $p$, that is $1/p+1/p^{\ast}=1$ and, as
usual, $1^{\ast}=\infty$ and $\infty^{\ast}=1.$

For the sake of completeness, we shall recall the notions of absolutely
summing multilinear operators and multiple summing multilinear operators.

If $\left(  r,s\right)  \in(0,\infty)\times\lbrack1,\infty]$ and $1/r\leq
m/s,$ an operator $T\in\mathcal{L}(E_{1},...,E_{m};F)$ is absolutely $(r;s)$%
-summing is there is a constant $C>0$ be such that
\[
\left(  \sum\limits_{j=1}^{\infty}\left\Vert T(x_{j}^{(1)},...,x_{j}%
^{(m)})\right\Vert _{F}^{r}\right)  ^{\frac{1}{r}}\leq C\prod_{k=1}%
^{m}\left\Vert (x_{j_{k}}^{(k)})_{j_{k}=1}^{\infty}\right\Vert _{w,s}%
\]
for every $\left(  x_{j}^{(k)}\right)  _{j=1}^{\infty}\in\ell_{s}^{w}(E_{k}).$

The class of absolutely $(r;s)$-summing operators is denoted by $\mathcal{L}%
_{as,(r;s)}(E_{1},...,E_{m};F)$ and the infimum taken over all possible
constants $C>0$ satisfying the previous inequality defines a norm in
$\mathcal{L}_{as,(r;s)}(E_{1},...,E_{m};F)$, which is denoted by
$\pi_{as\left(  r;s\right)  }^{m}$ (see \cite{AlencarMatos}).

If $1\leq s\leq r<\infty,$ an operator $T:E_{1}\times\cdots\times
E_{m}\rightarrow F$ is multiple\emph{ }$(r;s)$-summing if there is a constant
$C>0$ be such that
\[
\left(  \sum\limits_{j_{1},...,j_{m}=1}^{\infty}\left\Vert T(x_{j_{1}}%
^{(1)},...,x_{j_{m}}^{(m)})\right\Vert _{F}^{r}\right)  ^{\frac{1}{r}}\leq
C\prod_{k=1}^{m}\left\Vert (x_{j_{k}}^{(k)})_{j_{k}=1}^{\infty}\right\Vert
_{w,s}%
\]
for every $\left(  x_{j_{k}}^{(k)}\right)  _{j_{k}=1}^{\infty}\in\ell_{s}%
^{w}(E_{k}).$

The class of multiple $(r;s)$-summing operators is denoted by $\Pi
_{mult(r,s)}(E_{1},...,E_{m};F)$ (see \cite{collec, jmaa}). We recall the following inclusion theorem
(see \cite[Proposition 3.4]{PSDianaE} and \cite{nacib, bayart2} for extended
versions) that will be useful later:

\begin{theorem}
[see \cite{PSDianaE}]\label{9990}Let $m$ be a positive integer and $1\leq
s\leq u<\frac{mrs}{mr-s}.$ Then, for any Banach spaces $E_{1},...,E_{m},F$ we
have
\[
\Pi_{mult(r;s)}\left(  E_{1},\dots,E_{m};F\right)  \subset\Pi_{mult(\frac
{rsu}{su+mrs-mru};u)}\left(  E_{1},\dots,E_{m};F\right)
\]
and the inclusion has norm $1$.
\end{theorem}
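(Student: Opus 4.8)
The plan is to derive the stronger summing property from the hypothesis by a scaling-and-duality argument carried out one variable at a time. First I would reduce to finitely supported sequences, so that all sums are finite and every exponent manipulation is justified; the general case then follows by taking suprema. It is convenient to record the reciprocal form of the target exponent, namely
\[
\frac{1}{r'} = \frac{1}{r} - m\left(\frac{1}{s} - \frac{1}{u}\right), \qquad r' := \frac{rsu}{su+mrs-mru},
\]
so that the hypothesis $u < \frac{mrs}{mr-s}$ is exactly the requirement $1/r' > 0$, which guarantees that $r'$ (and all the intermediate exponents below) is admissible.

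The elementary engine is the following consequence of H\"older's inequality: if $1/t = 1/s - 1/u$ and $(\lambda_j)_j \in \ell_t$, then for any $(z_j)_j \in \ell_u^w(E)$ the scaled sequence $(\lambda_j z_j)_j$ lies in $\ell_s^w(E)$ with $\|(\lambda_j z_j)_j\|_{w,s} \le \|(\lambda_j)_j\|_t \, \|(z_j)_j\|_{w,u}$. I would prove the theorem by induction on the number $\theta$ of variables whose summing exponent has already been improved from $s$ to $u$, the inductive statement being that $T$ is multiple summing with mixed exponents $(r_\theta; \dots)$ — $\theta$ of the entries equal to $u$ and the remaining $m-\theta$ equal to $s$ — where $1/r_\theta = 1/r - \theta/t$, and with summing constant at most the original multiple $(r;s)$-summing constant of $T$. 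The base case $\theta = 0$ is precisely the hypothesis, and the anisotropic framework needed to phrase the intermediate steps is the one of the extended versions mentioned in \cite{nacib, bayart2}.

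For the inductive step I pick a variable, say the $k$-th, still carrying the exponent $s$, replace its sequence $(x^{(k)}_{j_k})_{j_k} \in \ell_u^w(E_k)$ by $(\lambda_{j_k} x^{(k)}_{j_k})_{j_k}$ with $(\lambda_{j_k})_{j_k}\in\ell_t$, and feed this into the current summing inequality; by multilinearity the scalars $\lambda_{j_k}^{r_\theta}$ factor out of the left-hand norm, and the scaling lemma controls the corresponding weak norm on the right. Writing $A_{\mathbf{j}} = \|T(x^{(1)}_{j_1}, \dots, x^{(m)}_{j_m})\|^{r_\theta}$ and $B_{j_k} = \sum_{\text{other indices}} A_{\mathbf{j}}$, the resulting estimate reads $\sum_{j_k} \mu_{j_k} B_{j_k} \le (\text{const})\,\|(\mu_{j_k})_{j_k}\|_{t/r_\theta}$ for all $\mu_{j_k} = \lambda_{j_k}^{r_\theta} \ge 0$, and duality against $\ell_{t/r_\theta}$ converts this into an $\ell_{(t/r_\theta)^{\ast}}$-bound on $(B_{j_k})_{j_k}$. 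The exponent bookkeeping gives $r_\theta (t/r_\theta)^{\ast} = r_{\theta+1}$ with $1/r_{\theta+1} = 1/r_\theta - 1/t$, precisely the required update.

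The main obstacle — and the reason the argument must proceed variable-by-variable rather than by scaling all variables at once — is that a simultaneous scaling followed by duality would only produce an \emph{iterated} mixed norm of the array $(A_{\mathbf{j}})$, not the single \emph{joint} $\ell_{r'}$ norm over all indices that the conclusion demands, since the product weight $\prod_k \lambda^{(k)}_{j_k}$ cannot reproduce the non-factorable diagonal weight needed to extract the joint norm. I would resolve this with the superadditivity inequality $\big(\sum_i a_i\big)^p \ge \sum_i a_i^p$ (valid for $a_i \ge 0$, $p \ge 1$), applied with $p = (t/r_\theta)^{\ast} \ge 1$, which bounds $\sum_{\mathbf{j}} A_{\mathbf{j}}^{(t/r_\theta)^{\ast}}$ from above by $\sum_{j_k} B_{j_k}^{(t/r_\theta)^{\ast}}$; this collapses the inner sum back into the full joint sum in the favorable direction and hence costs nothing in the upper bound, so the summing constant is preserved at each step. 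Iterating $m$ times improves every variable to exponent $u$ and accumulates the reciprocal exponent to $1/r' = 1/r - m/t$, and since every tool used (H\"older, duality, superadditivity) carries constant $1$, the final inclusion has norm $1$. The one point requiring care is checking $(t/r_\theta)^{\ast} \ge 1$, i.e.\ $r_\theta \le t$, at every stage; this is exactly where the admissibility hypothesis $1/r'>0$ is consumed, since $1/r_\theta = 1/r' + (m-\theta)/t \ge 1/r' + 1/t > 1/t$ for all $\theta \le m-1$.
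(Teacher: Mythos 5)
Your proof is correct: the coordinate-by-coordinate induction via the H\"older scaling lemma, $\ell_{t/r_\theta}$-duality (legitimate since $r_\theta<t$ for $\theta\le m-1$, exactly as you check), and the inequality $\bigl(\sum_i a_i\bigr)^p\ge\sum_i a_i^p$ to pass from the iterated mixed norm back to the joint norm closes as described, with the bookkeeping $1/r_{\theta+1}=1/r_\theta-1/t$ accumulating to $1/r-m(1/s-1/u)$, which is the stated exponent, and with the summing constant preserved at every step. Note that this paper does not actually prove Theorem \ref{9990} but imports it from \cite{PSDianaE}; your argument is essentially the scaling--duality mechanism (the ``regularity principle'') on which the proof in that reference is based, so this is the same approach as the cited source rather than a new route.
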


\section{Optimality of Kwapie\'{n}'s inequality for multilinear operators}

In this section we show that (\ref{sss}) is optimal for $2\leq p\leq\infty.$
Let $0<r<\frac{2p}{mp+2}$. The proof is an adaptation or an argument used in
the proof of \cite[Theorem 1.1]{ps}. Let $n\in\mathbb{N}$ and $x_{1}%
,...,x_{n}\in\ell_{1}$ be non null vectors. Consider $x_{1}^{\ast}%
,...,x_{n}^{\ast}\in B_{\ell_{\infty}}$ so that $x_{j}^{\ast}(x_{j})=\Vert
x_{j}\Vert$ for every $j=1,...,n$. Let $a_{1},...,a_{n}$ be scalars such that
$\sum_{j=1}^{n}|a_{j}|^{p/r}=1$ and define the following $m$-linear operator%

\[
T_{n}:\ell_{1}\times\cdots\times\ell_{1}\longrightarrow\ell_{p},\ \ T_{n}%
(x^{(1)},...,x^{(m)})=%
{\textstyle\sum_{j=1}^{n}}
|a_{j}|^{\frac{1}{r}}x_{j}^{\ast}(x^{(1)})\cdots x_{j}^{\ast}(x^{(m)})e_{j}%
\]
where $e_{j}$ is the $j$-th canonical vector of $\ell_{p}$. Note that, for
every $(x^{(1)},...,x^{(m)})\in\ell_{1}\times\cdots\times\ell_{1}$, we have%
\begin{align*}
\Vert T_{n}(x^{(1)},...,x^{(m)})\Vert &  =\left(
{\textstyle\sum_{j=1}^{n}}
\left\vert |a_{j}|^{\frac{1}{r}}x_{j}^{\ast}(x^{(1)})\cdots x_{j}^{\ast
}(x^{(m)})\right\vert ^{p}\right)  ^{\frac{1}{p}}\\
&  \leq\left(
{\textstyle\sum_{j=1}^{n}}
|a_{j}|^{\frac{p}{r}}\right)  ^{\frac{1}{p}}\Vert x^{(1)}\Vert\cdots\Vert
x^{(m)}\Vert\\
&  =\Vert x^{(1)}\Vert\cdots\Vert x^{(m)}\Vert.
\end{align*}

It is plain that $T_{n}$ is absolutely $(r;1)$-summing. Note that for
$k=1,\ldots,n$, we have
\[
\Vert T_{n}(x_{k},....,x_{k})\Vert=\left\Vert \sum_{j=1}^{n}|a_{j}|^{\frac
{1}{r}}x_{j}^{\ast}(x_{k})^{m}e_{j}\right\Vert \geq|a_{k}|^{\frac{1}{r}}%
x_{k}^{\ast}(x_{k})^{m}=|a_{k}|^{\frac{1}{r}}\Vert x_{k}\Vert^{m}.
\]
Hence
\begin{align*}
\left(
{\textstyle\sum_{j=1}^{n}}
\Vert x_{j}\Vert^{mr}|a_{j}|\right)  ^{\frac{1}{r}}  &  =\left(
{\textstyle\sum_{j=1}^{n}}
\left(  \Vert x_{j}\Vert^{m}|a_{j}|^{\frac{1}{r}}\right)  ^{r}\right)
^{\frac{1}{r}}\\
&  \leq\left(
{\textstyle\sum_{j=1}^{n}}
\Vert T_{n}(x_{j},...,x_{j})\Vert^{r}\right)  ^{\frac{1}{r}}\\
&  \leq\pi_{r,1}(T_{n})\Vert(x_{j})_{j=1}^{n}\Vert_{w,1}^{m}.
\end{align*}
Since this last inequality holds whenever $\sum_{j=1}^{n}|a_{j}|^{\frac{p}{r}%
}=1$, if $\left(  p/r\right)  ^{\ast}$ is the conjugate index to $p/r,$ we
obtain
\begin{align*}
\left(  \sum_{j=1}^{n}\Vert x_{j}\Vert^{mr\left(  p/r\right)  ^{\ast}}\right)
^{\frac{1}{r\left(  p/r\right)  ^{\ast}}}  &  \leq\sup\left\{  \sum_{j=1}%
^{n}|a_{j}|\Vert x_{j}\Vert^{mr};\sum_{j=1}^{n}|a_{j}|^{\frac{p}{r}}=1\right\}
\\
&  \leq\left(  \pi_{r,1}^{m}(T_{n})\Vert(x_{j})_{j=1}^{n}\Vert_{w,1}%
^{m}\right)  ^{r}%
\end{align*}
and, then,
\begin{equation}
\frac{\left(
{\textstyle\sum_{j=1}^{n}}
\Vert x_{j}\Vert^{mr\left(  \frac{p}{r}\right)  ^{\ast}}\right)  ^{\frac
{1}{r\left(  \frac{p}{r}\right)  ^{\ast}}}}{\Vert(x_{j})_{j=1}^{n}\Vert
_{w,1}^{m}}\leq\pi_{r,1}^{m}(T_{n}). \label{01}%
\end{equation}
Since $0<r<\frac{2p}{mp+2}$ we have $mr\left(  p/r\right)  ^{\ast}<2$ and by
the Dvoretzky--Rogers Theorem (see \cite[Theorem 10.5]{diestel}), we know that
$id_{\ell_{1}}$ is not $\left(  mr\left(  p/r\right)  ^{\ast};1\right)
$-summing. Hence
\begin{equation}
\lim_{n\rightarrow\infty}\pi_{r,1}(T_{n})=\infty~\text{and}~\Vert T_{n}%
\Vert=1 \label{02}%
\end{equation}
and we conclude that the space of all absolutely $(r;1)$-summing $m$-linear
operators from $\ell_{1}$ to $\ell_{p}$ is not closed in $\mathcal{L}(^{m}%
\ell_{1};\ell_{p})$.

\section{Kwapie\'{n}'s theorem for blocks of sequences}

We shall need to introduce some terminology on tensor products. The product
\[
\widehat{\otimes}_{j\in\{1,\ldots,n\}}^{\pi}E_{j}=E_{1}\widehat{\otimes}^{\pi
}\cdots\widehat{\otimes}^{\pi}E_{n}%
\]
denotes the completed projective $n$-fold tensor product of $E_{1}%
,\ldots,E_{n}$. The tensor $x_{1}\otimes\cdots\otimes x_{n}$ is denoted for
short by $\otimes_{j\in\{1,\ldots,n\}}x_{j}$, whereas $\otimes_{n}x$ denotes
the tensor $x\otimes\cdots\otimes x$. In a similar way, $\times_{j\in
\{1,\ldots,n\}}E_{j}$ denotes the product space $E_{1}\times\cdots\times
E_{n}$. Let $n$ be a positive integer and $1\leq p_{1},\ldots,p_{n}<\infty$.
From now on, in this section, $r$ is defined by
\[
1/r=\min\left\{  1,%
{\textstyle\sum\limits_{i=1}^{n}}
1/p_{i}\right\}  .
\]
Let $D_{r}\subset\ell_{p_{1}}\widehat{\otimes}^{\pi}\cdots\widehat{\otimes
}^{\pi}\ell_{p_{n}}$ be the linear span of the tensors $\otimes_{n}e_{i}$ and
$\overline{D}_{r}$ be its closure.

The following result holds for $1\leq p_{1},\ldots,p_{n}\leq\infty,$ with
$\sum_{i=1}^{n}1/p_{i}<1$ (see \cite[Lemma 2.1]{blocos}):

\begin{lemma}
[{\cite[Lemma 2.1]{blocos}}]The map $u_{r}:\ell_{r}\rightarrow\overline{D_{r}%
}$, given by%
\[
u_{r}\left(
{\textstyle\sum\limits_{i=1}^{\infty}}
a_{i}e_{i}\right)  =%
{\textstyle\sum\limits_{i=1}^{\infty}}
a_{i}\,\otimes_{n}e_{i}%
\]
is an isometric isomorphism onto.
\end{lemma}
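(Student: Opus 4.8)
The plan is to establish, for every finitely supported scalar sequence $(a_i)$, the isometric identity $\left\Vert \sum_i a_i\,\otimes_n e_i\right\Vert _{\pi}=\left(\sum_i|a_i|^r\right)^{1/r}$, and then to pass to the limit by density. Under the hypothesis $\sum_{i=1}^n 1/p_i<1$ we have $1/r=\sum_{i=1}^n 1/p_i$ and $1<r<\infty$, so $r^{\ast}$ is finite and the conjugate exponents $r^{\ast},p_1,\dots,p_n$ satisfy $1/r^{\ast}+\sum_{i=1}^n 1/p_i=1$; this single relation drives both estimates through the generalized H\"older inequality. Linearity of $u_r$ on finitely supported sequences is clear, so once the isometric identity holds, $u_r$ extends to an isometry of $\ell_r$ whose image is the closure of $D_r$, namely $\overline{D_r}$, which gives the asserted isometric isomorphism onto.

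For the lower bound I would test the tensor against a suitable element of the dual. A bounded $n$-linear form $\varphi\colon\ell_{p_1}\times\cdots\times\ell_{p_n}\to\mathbb{K}$ defines a continuous functional on the projective tensor product of norm at most $\left\Vert\varphi\right\Vert$, whence $\left\Vert z\right\Vert _{\pi}\ge|\varphi(z)|$ for every $z$ whenever $\left\Vert\varphi\right\Vert\le1$. Choose $(b_i)\in\ell_{r^{\ast}}$ with $\left\Vert(b_i)\right\Vert _{r^{\ast}}=1$ and $\sum_i a_ib_i=\left(\sum_i|a_i|^r\right)^{1/r}$, and set $\varphi(x^{(1)},\dots,x^{(n)})=\sum_i b_i\,x_i^{(1)}\cdots x_i^{(n)}$. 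The generalized H\"older inequality with exponents $r^{\ast},p_1,\dots,p_n$ yields $\left\Vert\varphi\right\Vert\le\left\Vert(b_i)\right\Vert _{r^{\ast}}=1$, while $\varphi\!\left(\sum_i a_i\,\otimes_n e_i\right)=\sum_i a_ib_i$, so $\left\Vert\sum_i a_i\,\otimes_n e_i\right\Vert _{\pi}\ge\left(\sum_i|a_i|^r\right)^{1/r}$.

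The upper bound is the heart of the matter, and the obstacle is that a rank-one tensor whose factors are honest vectors of the $\ell_{p_k}$ carries off-diagonal terms $e_{i_1}\otimes\cdots\otimes e_{i_n}$ with the $i_k$ not all equal, which do not lie in $D_r$. I would annihilate them by averaging over unimodular phases. Introduce independent families $(\theta_i^{(1)})_i,\dots,(\theta_i^{(n-1)})_i$ of independent phases uniform on the unit circle, with the conventions $\theta_i^{(0)}=\theta_i^{(n)}=1$, and form the random rank-one tensor whose $k$-th factor is $\sum_i c_i^{(k)}\,\theta_i^{(k)}\,\overline{\theta_i^{(k-1)}}\,e_i$. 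Since $\mathbb{E}[\theta_i^{(m)}\overline{\theta_j^{(m)}}]=\delta_{ij}$, the phase coefficient of $e_{i_1}\otimes\cdots\otimes e_{i_n}$ telescopes in expectation to $\prod_{m=1}^{n-1}\delta_{i_m,i_{m+1}}$, which vanishes off the diagonal; hence the expectation of the random tensor equals $\sum_i c_i^{(1)}\cdots c_i^{(n)}\,\otimes_n e_i$. Taking $c_i^{(k)}=|a_i|^{r/p_k}$ for $k\ge2$ and $c_i^{(1)}=(a_i/|a_i|)\,|a_i|^{r/p_1}$ gives $c_i^{(1)}\cdots c_i^{(n)}=a_i$, because $\sum_k 1/p_k=1/r$.

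Working with finitely many indices, one may let the phases range over roots of unity, so that this expectation is a finite average of rank-one tensors; the projective triangle inequality then gives $\left\Vert\sum_i a_i\,\otimes_n e_i\right\Vert _{\pi}\le\prod_k\left\Vert(c_i^{(k)})_i\right\Vert _{p_k}$, the factor norms being unchanged by the unimodular phases. Since $|c_i^{(k)}|^{p_k}=|a_i|^r$, one computes $\prod_k\left\Vert(c_i^{(k)})_i\right\Vert _{p_k}=\left(\sum_i|a_i|^r\right)^{1/r}$ (with the convention $1/p_k=0$ and $|a_i|^{0}=1$ when $p_k=\infty$). This matches the lower bound, so $u_r$ is isometric on finitely supported sequences, and a density argument finishes the proof. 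The delicate point throughout is the telescoping phase cancellation, which must be arranged so that exactly the full diagonal survives; everything else is the exponent bookkeeping forced by $1/r^{\ast}+\sum_i 1/p_i=1$.
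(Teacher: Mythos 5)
Your proof is correct, but there is nothing in the paper to compare it against: the authors do not prove this lemma, they quote it verbatim from \cite[Lemma 2.1]{blocos}, and its unrestricted companion (Lemma \ref{comp}) is likewise attributed to the ideas of \cite[Example 2.23(b)]{Ry}, so the ``paper's own proof'' is a citation. What you have written is a complete, self-contained proof, and it is in essence the classical diagonal argument (going back to Arias--Farmer and to Ryan's book) that underlies the cited results: the lower bound $\left\Vert \sum_i a_i\otimes_n e_i\right\Vert_{\pi}\geq\left(\sum_i|a_i|^r\right)^{1/r}$ comes from testing against a norm-one $n$-linear form built from a dual $\ell_{r^{\ast}}$ sequence, where the generalized H\"older inequality with $1/r^{\ast}+\sum_{i=1}^{n}1/p_i=1$ and the isometry $\bigl(\ell_{p_1}\widehat{\otimes}^{\pi}\cdots\widehat{\otimes}^{\pi}\ell_{p_n}\bigr)^{\ast}=\mathcal{L}(\ell_{p_1},\ldots,\ell_{p_n};\mathbb{K})$ do the work; the upper bound exhibits $\sum_i a_i\otimes_n e_i$ as a finite average of rank-one tensors via $n-1$ independent families of unimodular scalars arranged to telescope. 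Your expectation computation $\prod_{m=1}^{n-1}\delta_{i_m,i_{m+1}}$ is right, the phases leave the $\ell_{p_k}$ norms of the factors unchanged, the exponent bookkeeping $\prod_k\left(\sum_i|a_i|^r\right)^{1/p_k}=\left(\sum_i|a_i|^r\right)^{1/r}$ closes the loop, and completeness of $\ell_r$ plus density of finitely supported sequences yields an isometry onto $\overline{D_r}$. Two minor points to tidy up: over real scalars the ``unit circle'' must be read as the signs $\pm1$ (nothing changes, since each family occurs exactly twice in every monomial); and your assertion $1<r<\infty$ tacitly excludes the degenerate case $p_1=\cdots=p_n=\infty$, which the hypothesis $\sum_{i=1}^{n}1/p_i<1$ formally allows but in which $r=\infty$, finitely supported sequences are no longer dense, and the closed span of the $\otimes_n e_i$ is a copy of $c_0$ rather than of $\ell_\infty$; that case should be excluded explicitly, as it implicitly is in the sources.
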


Following the ideas from \cite[Example 2.23(b)]{Ry}, the above result can be
easily complemented, now without the restriction $\sum_{i=1}^{n}1/p_{i}<1.$

\begin{lemma}
\label{comp}Let $n$ be a positive integer and $1\leq p_{1},\ldots,p_{n}%
\leq\infty$. The map $u_{r}:\ell_{r}\rightarrow\overline{D_{r}}$, given by
\[
u_{r}\left(
{\textstyle\sum\limits_{i=1}^{\infty}}
a_{i}e_{i}\right)  =%
{\textstyle\sum\limits_{i=1}^{\infty}}
a_{i}\,\otimes_{n}e_{i}%
\]
is an isometric isomorphism onto.
\end{lemma}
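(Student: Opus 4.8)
The plan is to reduce the statement to the only genuinely new case and then settle that case by a duality computation of the projective norm. Since $1/r=\min\{1,\sum_{i=1}^{n}1/p_{i}\}$, whenever $\sum_{i=1}^{n}1/p_{i}<1$ we have $r>1$ and the assertion is exactly the content of \cite[Lemma 2.1]{blocos}; hence it suffices to treat the remaining case $\sum_{i=1}^{n}1/p_{i}\geq1$, in which $r=1$. There the map under consideration is $u_{1}:\ell_{1}\rightarrow\overline{D_{r}}$, and I would prove directly that $\Vert u_{1}(a)\Vert_{\pi}=\Vert a\Vert_{1}$ for every $a=(a_{i})_{i}\in\ell_{1}$, from which the isometric isomorphism onto $\overline{D_{r}}$ follows.

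First I would establish the easy upper estimate. Since $\Vert\otimes_{n}e_{i}\Vert_{\pi}=\Vert e_{i}\Vert_{p_{1}}\cdots\Vert e_{i}\Vert_{p_{n}}=1$ (also when some $p_{k}=\infty$), the series $\sum_{i}a_{i}\,\otimes_{n}e_{i}$ converges absolutely in the projective norm and the triangle inequality yields $\Vert u_{1}(a)\Vert_{\pi}\leq\sum_{i}|a_{i}|\,\Vert\otimes_{n}e_{i}\Vert_{\pi}=\Vert a\Vert_{1}$. In particular $u_{1}$ is well defined and norm-decreasing, and it carries the finitely supported sequences onto $D_{r}$.

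For the reverse inequality I would invoke the identification of the dual of $\ell_{p_{1}}\widehat{\otimes}^{\pi}\cdots\widehat{\otimes}^{\pi}\ell_{p_{n}}$ with the space of bounded $n$-linear forms on $\ell_{p_{1}}\times\cdots\times\ell_{p_{n}}$. Given $a\in\ell_{1}$, I would choose unimodular scalars $\varepsilon_{i}$ with $a_{i}\varepsilon_{i}=|a_{i}|$ and consider the $n$-linear form
\[
\varphi\left(  x^{(1)},\ldots,x^{(n)}\right)  =\sum_{i=1}^{\infty}\varepsilon_{i}\,x_{i}^{(1)}\cdots x_{i}^{(n)}.
\]
Evaluating the functional associated with $\varphi$ on $u_{1}(a)$ produces $\sum_{i}a_{i}\varepsilon_{i}=\Vert a\Vert_{1}$, so it only remains to check $\Vert\varphi\Vert\leq1$. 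This is precisely where the hypothesis $\sum_{i}1/p_{i}\geq1$ is used: bounding $|\varphi(x^{(1)},\ldots,x^{(n)})|\leq\sum_{i}\prod_{k}|x_{i}^{(k)}|$ and applying the generalized H\"older inequality with $1/\rho=\sum_{k}1/p_{k}\geq1$ gives $\sum_{i}\prod_{k}|x_{i}^{(k)}|\leq\prod_{k}\Vert x^{(k)}\Vert_{p_{k}}$, since $\rho\leq1$ forces $\Vert\cdot\Vert_{1}\leq\Vert\cdot\Vert_{\rho}$ on sequence spaces. Hence $\Vert\varphi\Vert\leq1$ and therefore $\Vert u_{1}(a)\Vert_{\pi}\geq\Vert a\Vert_{1}$.

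Combining the two inequalities shows that $u_{1}$ is an isometry; being an isometry with complete domain, its range is closed, and as it contains the dense subspace $D_{r}$ of $\overline{D_{r}}$ it must equal $\overline{D_{r}}$, giving the claimed isometric isomorphism onto. I expect the one delicate point to be the verification $\Vert\varphi\Vert\leq1$, in particular handling the degenerate exponents $p_{k}=\infty$ and the strict case $\sum_{i}1/p_{i}>1$ through the nesting of the $\ell_{\rho}$-scale; the rest is routine.
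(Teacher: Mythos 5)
Your proof is correct and takes essentially the approach the paper itself intends: the paper offers no written argument for this lemma, merely invoking \cite[Lemma 2.1]{blocos} for the case $\sum_{i=1}^{n}1/p_{i}<1$ and ``the ideas from \cite[Example 2.23(b)]{Ry}'' for the rest, and your treatment of the remaining case $r=1$ (the cross-norm upper bound together with the diagonal $n$-linear form whose norm is at most $1$ by the generalized H\"older inequality) is precisely that example's duality argument carried out for $n$ factors.
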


We also need the following results (see \cite{blocos}):

\begin{proposition}
[{\cite[Proposition 2.3]{blocos}}]\label{lin} Let $m$ be a positive integer
and let $E_{1},\dots,E_{m},F$ be Banach spaces. Let $1\leq k\leq m$ and
$I_{1},\ldots,I_{k}$ be pairwise disjoint non-void subsets of $\{1,\ldots,m\}$
such that $\cup_{j=1}^{k}I_{j}=\{1,\ldots,m\}$. Then given $T\in{\mathcal{L}%
}(E_{1},\ldots,E_{m};F)$, there is a unique $\widehat{T}\in{\mathcal{L}%
}(\widehat{\otimes}_{j\in I_{1}}^{\pi}E_{j},\ldots,\widehat{\otimes}_{j\in
I_{k}}^{\pi}E_{j};F)$ such that
\[
\widehat{T}(\otimes_{j\in I_{1}}x_{j},\dots,\otimes_{j\in I_{k}}x_{j}%
)=T(x_{1},\dots,x_{m})
\]
and $\Vert\widehat{T}\Vert=\Vert T\Vert$. The correspondence $T\leftrightarrow
\widehat{T}$ determines an isometric isomorphism between the spaces
${\mathcal{L}}(E_{1},\ldots,E_{m};F)$ and ${\mathcal{L}}(\widehat{\otimes
}_{j\in I_{1}}^{\pi}E_{j},\ldots,\widehat{\otimes}_{j\in I_{k}}^{\pi}E_{j};F)$.
\end{proposition}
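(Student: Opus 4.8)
The plan is to construct $\widehat{T}$ by composing two applications of the universal property of the projective tensor product with its associativity, so that every map involved is a canonical isometric isomorphism. First I would record the universal property in its multilinear form: for Banach spaces $X_1,\dots,X_N$ and $F$, the linearization $S\mapsto S_L$ determined by $S_L(x_1\otimes\cdots\otimes x_N)=S(x_1,\dots,x_N)$ is an isometric isomorphism from $\mathcal{L}(X_1,\dots,X_N;F)$ onto the space of bounded linear operators $X_1\widehat{\otimes}^{\pi}\cdots\widehat{\otimes}^{\pi}X_N\to F$. Taking $N=m$ and $X_j=E_j$ yields an isometry $\mathcal{L}(E_1,\dots,E_m;F)\cong\mathcal{L}(E_1\widehat{\otimes}^{\pi}\cdots\widehat{\otimes}^{\pi}E_m;F)$, the right-hand side denoting bounded linear operators into $F$.

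Next I would invoke the associativity of the projective tensor product, grouping the $m$ factors according to the partition $I_1,\dots,I_k$. This provides a canonical isometric isomorphism
\[
E_1\widehat{\otimes}^{\pi}\cdots\widehat{\otimes}^{\pi}E_m\;\cong\;\bigl(\widehat{\otimes}_{j\in I_1}^{\pi}E_j\bigr)\widehat{\otimes}^{\pi}\cdots\widehat{\otimes}^{\pi}\bigl(\widehat{\otimes}_{j\in I_k}^{\pi}E_j\bigr),
\]
under which the elementary tensor $x_1\otimes\cdots\otimes x_m$ corresponds to the grouped tensor $(\otimes_{j\in I_1}x_j)\otimes\cdots\otimes(\otimes_{j\in I_k}x_j)$. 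This identifies the $m$-fold tensor product with a $k$-fold projective tensor product of the block spaces $\widehat{\otimes}_{j\in I_\ell}^{\pi}E_j$.

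Finally I would apply the universal property a second time, now in reverse and at the level of the $k$ block spaces: bounded linear operators on $\bigl(\widehat{\otimes}_{j\in I_1}^{\pi}E_j\bigr)\widehat{\otimes}^{\pi}\cdots\widehat{\otimes}^{\pi}\bigl(\widehat{\otimes}_{j\in I_k}^{\pi}E_j\bigr)$ correspond isometrically to elements of $\mathcal{L}(\widehat{\otimes}_{j\in I_1}^{\pi}E_j,\dots,\widehat{\otimes}_{j\in I_k}^{\pi}E_j;F)$. Composing the three isometries and tracing an elementary tensor through each of them yields a $\widehat{T}$ satisfying $\widehat{T}(\otimes_{j\in I_1}x_j,\dots,\otimes_{j\in I_k}x_j)=T(x_1,\dots,x_m)$ with $\Vert\widehat{T}\Vert=\Vert T\Vert$. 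Uniqueness follows at once, since the tensors $\otimes_{j\in I_\ell}x_j$ span a dense subspace of each factor $\widehat{\otimes}_{j\in I_\ell}^{\pi}E_j$ and a bounded $k$-linear operator is determined by its values on such tensors.

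I expect the only delicate point to be the bookkeeping in the associativity step: one must verify that the canonical associativity isomorphism indeed sends $x_1\otimes\cdots\otimes x_m$ to the grouped tensor $(\otimes_{j\in I_1}x_j)\otimes\cdots\otimes(\otimes_{j\in I_k}x_j)$ and that it preserves the projective norm exactly. This is classical but is the one place where the action on elementary tensors and the equality of norms must be checked together; once it is in hand, the proposition is a formal composition of isometric isomorphisms.
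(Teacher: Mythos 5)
Your argument is correct, but note that there is nothing in this paper to compare it against: the proposition is quoted verbatim from \cite[Proposition 2.3]{blocos} and is used here without proof, so your write-up supplies an argument the paper itself omits. The route you take --- linearize via the isometric identification ${\mathcal{L}}(E_1,\ldots,E_m;F)\cong{\mathcal{L}}(E_1\widehat{\otimes}^{\pi}\cdots\widehat{\otimes}^{\pi}E_m;F)$, regroup the factors by the isometric associativity of the projective tensor product, then de-linearize at the level of the $k$ block spaces --- is the standard one, and each step is classical and valid, as is your density-plus-multilinearity argument for uniqueness. One piece of bookkeeping that you flag but do not fully resolve should be made explicit: the sets $I_1,\ldots,I_k$ are arbitrary pairwise disjoint subsets of $\{1,\ldots,m\}$, not necessarily intervals of consecutive indices, so the associativity isomorphism alone does not carry $x_1\otimes\cdots\otimes x_m$ to the grouped tensor $(\otimes_{j\in I_1}x_j)\otimes\cdots\otimes(\otimes_{j\in I_k}x_j)$; you must first compose with the canonical permutation isomorphism that reorders the $m$ factors according to the concatenation of $I_1,\ldots,I_k$ (each in its internal order). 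This is harmless, since the projective norm is invariant under permutation of the factors --- immediate from its definition as an infimum over representations, or by linearizing the permuted multilinear map --- but without this extra isometry the elementary-tensor tracing in your final step does not literally go through when some $I_j$ fails to be a block of consecutive integers. With that insertion, your proof is complete: $\widehat{T}$ exists with $\Vert\widehat{T}\Vert=\Vert T\Vert$ because all three (now four) identifications are isometric onto, and the correspondence $T\leftrightarrow\widehat{T}$ is the composition of these isomorphisms, hence itself an isometric isomorphism.
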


Let $m$ be a positive integer and let $1\leq k\leq m$ and $\mathcal{I}%
=\left\{  I_{1},\ldots,I_{k}\right\}  $ be a family of pairwise disjoint
non-void subsets of $\{1,\ldots,m\}$ such that $\cup_{j=1}^{k}I_{j}%
=\{1,\ldots,m\}$. Let $\Omega_{I_{j}}\subset\mathbb{N}^{\left\vert
I_{j}\right\vert }$ be defined by%
\[
\Omega_{I_{j}}=Diag\left(  \mathbb{N}^{\left\vert I_{j}\right\vert }\right)
=\left\{  \left(  i,\ldots,i\right)  \in\mathbb{N}^{\left\vert I_{j}%
\right\vert }\right\}
\]
for all $j\in\{1,...,k\}$. Let us also define $\Omega_{\mathcal{I}}%
\subset\mathbb{N}^{m}$ as the product of diagonals%
\[
\Omega_{\mathcal{I}}=\Omega_{I_{1}}\times\cdots\times\Omega_{I_{k}}.
\]

We will denote by%
\[
\mathcal{L}_{\mathcal{I},as\left(  r;1\right)  }\left(  ^{m}\ell_{1};F\right)
\]
the space of all $T\in\mathcal{L}\left(  ^{m}\ell_{1};F\right)  $ such that%
\[%
{\textstyle\sum\limits_{\left(  j_{1},...,j_{m}\right)  \in\Omega
_{\mathcal{I}}}}
\left\Vert T\left(  x_{j_{1}}^{(1)},...,x_{j_{m}}^{(m)}\right)  \right\Vert
_{F}^{r}<\infty
\]
whenever
\[
\left(  x_{j_{i}}^{(i)}\right)  _{j_{i}=1}^{\infty}\in\ell_{1}^{w}\left(
\ell_{1}\right)  .
\]
Obviously
\[
\mathcal{L}_{\left\{  \left\{  1\right\}  ,\ldots,\left\{  m\right\}
\right\}  ,as\left(  r;1\right)  }\left(  ^{m}\ell_{1};F\right)
=\Pi_{mult(r;1)}\left(  ^{m}\ell_{1};F\right)
\]
and
\[
\mathcal{L}_{\left\{  1,\ldots,m\right\}  ,as\left(  r;1\right)  }\left(
^{m}\ell_{1};F\right)  =\mathcal{L}_{as\left(  r;1\right)  }\left(  ^{m}%
\ell_{1};F\right)  .
\]

The following lemma plays a crucial role to in the proof of the main theorem of
this section\textit{:}

\begin{lemma}
\label{lll}Let $F$ be a Banach space and let $\mathcal{I}=\left\{
I_{1},\ldots,I_{k}\right\}  $ be a family of pairwise disjoint non-void
subsets of $\{1,\ldots,m\}$ such that $\cup_{j=1}^{k}I_{j}=\{1,\ldots,m\}$. If
$n=\min\left\{  \left\vert I_{1}\right\vert ,\ldots,\left\vert I_{k}%
\right\vert \right\}  $, then the following assertions are equivalent:

(a) $\mathcal{L}\left(  ^{n}\ell_{1};F\right)  =\mathcal{L}_{as\left(
r;1\right)  }\left(  ^{n}\ell_{1};F\right)  $.

(b) $\mathcal{L}\left(  ^{m}\ell_{1};F\right)  =\mathcal{L}_{\mathcal{I}%
,as\left(  r;1\right)  }\left(  ^{m}\ell_{1};F\right)  $.
\end{lemma}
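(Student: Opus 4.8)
The plan is to establish the two implications separately, in each case transferring the problem from the $m$-linear operator to an operator on projective tensor products via Proposition \ref{lin}, and using Lemma \ref{comp} to recognize that along the diagonal of each block the relevant subspace of $\widehat{\otimes}_{j\in I_l}^{\pi}\ell_1$ is isometric to $\ell_1$. Throughout I would exploit the elementary but crucial fact that a projective tensor product of finitely many copies of $\ell_1$ is again isometrically an $\ell_1$-space, so that after grouping the coordinates according to $\mathcal{I}$ the operator $\widehat{T}$ furnished by Proposition \ref{lin} is a $k$-linear operator whose domain factors are isometric to $\ell_1$.

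For the implication (b) $\Rightarrow$ (a), which I expect to be the softer one and which dictates why the minimum appears, I would argue by embedding. Fix a block $I_{j_{0}}$ realizing the minimum, so $|I_{j_{0}}|=n$, and let $S\in\mathcal{L}(^{n}\ell_{1};F)$ be arbitrary. I would build $T\in\mathcal{L}(^{m}\ell_{1};F)$ that feeds its $n$ arguments indexed by $I_{j_{0}}$ into $S$ and evaluates every remaining coordinate against the fixed functional $e_{1}^{\ast}\in B_{\ell_{\infty}}$, so that $\|T\|=\|S\|$. Testing the block-summability of $T$ on the auxiliary sequence $(e_{1},0,0,\dots)$ along every coordinate outside $I_{j_{0}}$ forces each off-block diagonal to contribute a single nonzero term, and the sum over $\Omega_{\mathcal{I}}$ collapses exactly to $\sum_{i}\|S(x_{i}^{(1)},\dots,x_{i}^{(n)})\|_{F}^{r}$. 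Since the auxiliary sequence has weak-$1$ norm $1$, hypothesis (b) then yields precisely the absolutely $(r;1)$-summing inequality for $S$, which is (a).

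The reverse implication (a) $\Rightarrow$ (b) is where I expect the real difficulty. Given $T\in\mathcal{L}(^{m}\ell_{1};F)$ I would pass to $\widehat{T}$ through Proposition \ref{lin} and rewrite the sum over $\Omega_{\mathcal{I}}$ as a multiple sum for $\widehat{T}$ evaluated on the product tensors $Y_{i}^{(l)}=\otimes_{s\in I_{l}}x_{i}^{(s)}$ lying in $\widehat{\otimes}_{s\in I_{l}}^{\pi}\ell_{1}\cong\ell_{1}$. The plan is then to process the blocks one at a time: for a block of size exactly $n$ the hypothesis (a) applies directly once the diagonal is identified with $\ell_{1}$ via Lemma \ref{comp}, whereas for a block with $|I_{l}|>n$ the extra $|I_{l}|-n$ factors must be absorbed, which I would do by merging coordinates inside the block through the isometry $\widehat{\otimes}_{|I_{l}|}^{\pi}\ell_{1}\cong\ell_{1}$, so that only an $n$-linear summing estimate is ever invoked. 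The main obstacle, and the step I expect to require the most care, is to bound the weakly $1$-summable norm of the tensor sequences $(Y_{i}^{(l)})_{i}$ in $\widehat{\otimes}_{s\in I_{l}}^{\pi}\ell_{1}$ by the product $\prod_{s\in I_{l}}\|(x_{i}^{(s)})_{i}\|_{w,1}$ of the original weak norms, and to make the iterated application across the $k$ blocks close up with a single exponent $r$; here I anticipate using the inclusion Theorem \ref{9990} to trade summability for block size and thereby reconcile blocks of differing sizes with the single hypothesis coming from the minimal block.
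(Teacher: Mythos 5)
Your implication (b) $\Rightarrow$ (a) is correct, and is arguably cleaner than the paper's own treatment of that direction (the paper realizes the given $n$-linear operator inside an $m$-linear one via the diagonal projections $u_{1}^{-1}\circ d_{1}$ on projective tensor products, but the collapse idea is the same as your padding construction). Your operator $T$, which feeds the coordinates of a minimal block into $S$ and evaluates every other coordinate against $e_{1}^{\ast}$, tested with the sequence $(e_{1},0,0,\dots)$ in the off-block coordinates, does collapse the sum over $\Omega_{\mathcal{I}}$ onto a single index with weak norm $1$. The only point to make explicit is that membership in $\mathcal{L}_{\mathcal{I},as(r;1)}$ is defined by finiteness of the block sums, so what you directly obtain is finiteness of $\sum_{i}\|S(x_{i}^{(1)},\dots,x_{i}^{(n)})\|_{F}^{r}$ for all weakly $1$-summable inputs; the passage to the summing inequality with a constant is the standard closed-graph argument.

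The genuine gap is in (a) $\Rightarrow$ (b), and it lies in the choice of decomposition. Grouping the $m$ variables by the blocks $I_{1},\dots,I_{k}$ turns $\sum_{\Omega_{\mathcal{I}}}$ into a genuine multiple sum over $k$ independent indices $\alpha_{1},\dots,\alpha_{k}$, the $l$-th argument $Y_{\alpha_{l}}^{(l)}$ of the $k$-linear operator $\widehat{T}$ depending only on $\alpha_{l}$. To bound such a sum you would need $\widehat{T}$ to be multiple $(r;1)$-summing as a $k$-linear operator on copies of $\ell_{1}$, while hypothesis (a) provides only absolute (single-index, diagonal) $(r;1)$-summability of $n$-linear operators. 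None of your proposed repairs bridges this: merging coordinates inside a block lowers the degree of multilinearity but does not reduce the number of independent summation indices; Theorem \ref{9990} passes between multiple summing classes with different exponents and cannot manufacture joint summability out of diagonal summability; and a blockwise iteration (freeze the other blocks, apply (a) to one block, repeat) is precisely the coordinatewise-to-multiple problem, which degrades the exponent, whereas the lemma requires the same $r$ in (a) and (b). Incidentally, the step you single out as the main obstacle is not one: for the diagonal tensor sequences $(\otimes_{s\in I_{l}}x_{i}^{(s)})_{i}$ the weak-norm bound follows from the Defant--Voigt theorem (every bounded multilinear form is absolutely $(1;1)$-summing). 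The paper's decomposition is transversal to yours: it builds $n$ sets $J_{1},\dots,J_{n}$, where $J_{s}$ collects the $s$-th element of every block (possible because every block has at least $n$ elements) and $J_{1}$ absorbs the leftover coordinates; after Proposition \ref{lin}, the operator $\widehat{A}$ is $n$-linear on copies of $\ell_{1}$, and now each of its $n$ arguments is a tensor whose factors carry all $k$ of the free indices, so the sum over $\Omega_{\mathcal{I}}$ becomes a single sum over the countable index set $\mathbb{N}^{k}$ in which all $n$ arguments share the same index --- exactly the configuration to which (a) applies. The real crux is then to bound the weak-$1$ norm of the multi-indexed tensor sequences $\bigl(x_{\alpha_{1}}^{(p_{1})}\otimes\cdots\otimes x_{\alpha_{k}}^{(p_{k})}\bigr)_{(\alpha_{1},\dots,\alpha_{k})}$ in $\widehat{\otimes}^{\pi}\ell_{1}\cong\ell_{1}$ by $K_{G}^{2k-2}\prod_{j}\|(x_{i}^{(p_{j})})_{i}\|_{w,1}$, which is the multilinear Grothendieck-type estimate the paper imports from \cite{nachr}; this multi-index statement is strictly stronger than your diagonal one, and it is what makes the transversal route work.
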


\begin{proof}
(a) implies (b). Let $A\in\mathcal{L}(^{m}\ell_{1};F)$ and fix in each set
$I_{i}$ an order. We define the sets $J_{1},...,J_{n}$ in the following way:
for every $j\in\left\{  1,...,n\right\}  ,$%
\[
\left(  J_{j}\right)  _{i}=\left(  I_{i}\right)  _{j},\text{ for }i\leq k,
\]
where $\left(  J_{j}\right)  _{i}$ means the $i-$th element of $J_{j}$ (in the
same way for $\left(  I_{i}\right)  _{j}$). The rest of elements that are not
yet in any $J_{j}$ are included in $J_{1}.$ Thus, $J_{1}$ has $m-\left(
n-1\right)  k$ elements and $J_{j}$, for $j\in\left\{  2,...,n\right\}  ,$ has
$k$ elements.

Obviously $\left\{  J_{1},...,J_{n}\right\}  $ is a family of non--void
pairwise disjoint subsets of $\{1,\ldots,m\}$ such that $\cup_{j=1}^{n}%
J_{j}=\{1,\ldots,m\}$. By Proposition \ref{lin}, let $\widehat{A}%
\in{\mathcal{L}}(\widehat{\otimes}_{j\in J_{1}}^{\pi}\ell_{1},\ldots
,\widehat{\otimes}_{j\in J_{n}}^{\pi}\ell_{1};F)$ be such that
\[
\widehat{A}(\otimes_{j\in J_{1}}x^{\left(  j\right)  },\dots,\otimes_{j\in
J_{n}}x^{\left(  j\right)  })=A(x^{\left(  1\right)  },\dots,x^{\left(
m\right)  })
\]
for every $x^{\left(  j\right)  }\in\ell_{1}$. Since $\widehat{\otimes}_{j\in
J_{j}}^{\pi}\ell_{1}$ is isometrically isomorphic to $\ell_{1}$, for all
$j\in\left\{  1,...,n\right\}  $, and by assumption we have that $\widehat
{A}\in\mathcal{L}_{as\left(  r;1\right)  }\left(  ^{n}\ell_{1};F\right)  $.
Then $\pi_{as\left(  r;1\right)  }^{n}$ $(\widehat{A})\leq M\Vert\widehat
{A}\Vert=M\Vert A\Vert$, where $M$ is a constant independent of $A$. We get
\begin{align*}
&  \left(
{\textstyle\sum\limits_{\left(  j_{1},...,j_{m}\right)  \in\Omega
_{\mathcal{I}\left(  m,n\right)  }}}
\left\Vert A(x_{j_{1}}^{\left(  1\right)  },\dots,x_{j_{m}}^{\left(  m\right)
})\right\Vert _{F}^{r}\right)  ^{\frac{1}{r}}\\
&  =\left(
{\textstyle\sum\limits_{\left(  j_{1},...,j_{m}\right)  \in\Omega
_{\mathcal{I}\left(  m,n\right)  }}}
\left\Vert \widehat{A}(\otimes_{i_{1}\in J_{1}}x_{j_{i_{1}}}^{\left(
i_{1}\right)  },\dots,\otimes_{i_{n}\in J_{n}}x_{j_{i_{n}}}^{\left(
i_{n}\right)  })\right\Vert _{F}^{r}\right)  ^{\frac{1}{r}}\\
&  \leq\pi_{as\left(  r;1\right)  }^{n}(\widehat{A})\left\Vert \left(
\otimes_{i_{1}\in J_{1}}x_{j_{i_{1}}}^{\left(  i_{1}\right)  }\right)
_{\left(  J_{1}\right)  _{1},...,\left(  J_{1}\right)  _{m-\left(  k-1\right)
n}=1}^{\infty}\right\Vert _{w,1}\prod_{s=2}^{n}\left\Vert \left(
\otimes_{i_{s}\in J_{s}}x_{j_{i_{s}}}^{\left(  i_{s}\right)  }\right)
_{\left(  J_{s}\right)  _{1},...,\left(  J_{s}\right)  _{k}=1}^{\infty
}\right\Vert _{w,1}\\
&  \leq M\Vert A\Vert K_{G}^{2\left(  m-\left(  k-1\right)  n\right)  -2}%
\prod_{i_{1}\in J_{1}}\left\Vert \left(  x_{j}^{\left(  i_{1}\right)
}\right)  _{j=1}^{\infty}\right\Vert _{w,1}\prod_{s=2}^{n}K_{G}^{2k-2}%
\prod_{i_{s}\in J_{s}}\left\Vert \left(  x_{j}^{\left(  i_{s}\right)
}\right)  _{j=1}^{\infty}\right\Vert _{w,1},\hspace*{25em}%
\end{align*}
where $K_{G}$ stands for Grothendieck's constant (see \cite[pg. 1420]{nachr}).
We thus conclude that $A\in\mathcal{L}_{\mathcal{I},as\left(  r;1\right)
}\left(  ^{m}\ell_{1};F\right)  $.

\medskip

(b) implies (a). Let $A:\ell_{1}\times\cdots\times\ell_{1}\rightarrow F$ be a
bounded $n$-linear operator. For each $s=1,\ldots,n$, by Lemma \ref{comp}, the
diagonal space $\overline{D}_{1}$ is complemented in $\widehat{\otimes}_{i\in
J_{s}}^{\pi}\ell_{1}$, and consider the diagonal projection $d_{1}$ from
$\widehat{\otimes}_{i\in J_{s}}^{\pi}\ell_{1}$ onto $\overline{D}_{1}$, such
that $d_{1}(\sum_{j_{1},...,j_{\left\vert J_{s}\right\vert }}a_{(j_{1}%
,...,j_{\left\vert J_{s}\right\vert })}e_{j_{1}}\otimes\cdots\otimes
e_{j_{\left\vert J_{s}\right\vert }})$ is equal to $\sum_{j_{1}%
,...,j_{\left\vert J_{s}\right\vert }}a_{(j_{1},...,j_{\left\vert
J_{s}\right\vert })}e_{j_{1}}\otimes\cdots\otimes e_{j_{\left\vert
J_{s}\right\vert }}$ if $j_{1}=\cdots=j_{\left\vert J_{s}\right\vert }$ and to
$0$ otherwise. Define the $m$-linear map $T_{A}:\times_{j\in J_{1}}\ell
_{1}\times\cdots\times\times_{j\in J_{n}}\ell_{1}\rightarrow F$ by
\[
T_{A}(x^{(1)},\ldots,x^{(m)}):=A(u_{1}^{-1}\circ d_{1}(\otimes_{j\in J_{1}%
}x^{\left(  1\right)  }),\ldots,u_{1}^{-1}\circ d_{1}(\otimes_{j\in J_{n}%
}x^{\left(  n\right)  }))
\]
for every $x^{\left(  j\right)  }\in\ell_{1}$.

Since $\mathcal{L}\left(  ^{m}\ell_{1};F\right)  =\mathcal{L}_{\mathcal{I}%
,as\left(  r;1\right)  }\left(  ^{m}\ell_{1};F\right)  $, the following
information completes the proof:
\begin{align*}
T_{A}(x^{(1)},\ldots,x^{(1)},\ldots,x^{(k)},\ldots,x^{(k)})  &  =A(u_{1}%
^{-1}\circ d_{1}(\otimes_{i\in J_{1}}x^{\left(  1\right)  }),\ldots,u_{1}%
^{-1}\circ d_{1}(\otimes_{i\in J_{n}}x^{\left(  n\right)  }))\\
&  =A(u_{1}^{-1}(\otimes_{i\in J_{1}}x^{\left(  1\right)  }),\ldots,u_{1}%
^{-1}(\otimes_{i\in J_{n}}x^{\left(  n\right)  }))=A(x^{(1)},\ldots,x^{(n)}).
\end{align*}

\end{proof}

\begin{theorem}
[Kwapie\'{n}'s Theorem for blocks]Let $\mathcal{I}=\left\{  I_{1},\ldots
,I_{k}\right\}  $ be a family of pairwise disjoint non-void subsets of
$\{1,\ldots,m\}$ such that $\cup_{j=1}^{k}I_{j}=\{1,\ldots,m\}$. If
$n=\min\left\{  \left\vert I_{1}\right\vert ,\ldots,\left\vert I_{k}%
\right\vert \right\}  ,$ then%
\[
\mathcal{L}\left(  ^{m}\ell_{1};\ell_{p}\right)  =\mathcal{L}_{\mathcal{I}%
,as\left(  t;1\right)  }\left(  ^{m}\ell_{1};\ell_{p}\right)  .
\]
with%
\[
t=\left\{
\begin{array}
[c]{c}%
\frac{2p}{np+2p-2},\text{ if }1\leq p\leq2\\
\frac{2p}{np+2},\text{ if }2\leq p\leq\infty.
\end{array}
\right.
\]
Moreover, the parameter $t$ is optimal when $2\leq p\leq\infty$.
\end{theorem}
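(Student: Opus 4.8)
The plan is to deduce both assertions from the structural equivalence recorded in Lemma \ref{lll}, taking there $F=\ell_{p}$ and summing exponent equal to the displayed value $t$. The point is that $n=\min\{|I_{1}|,\dots,|I_{k}|\}$ is exactly the integer appearing in that lemma, so the $m$-linear block statement collapses to a genuine $n$-linear summing statement for which the relevant ingredients are already available.

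For the equality, I would first observe that the value $t$ in the statement is precisely the estimate (\ref{sss}) of \cite{bayart} read with $n$ playing the role of the number of factors: for $1\le p\le 2$ one has $t=\tfrac{2p}{np+2p-2}$ and for $2\le p\le\infty$ one has $t=\tfrac{2p}{np+2}$. Thus (\ref{sss}) guarantees that every $T\in\mathcal{L}(^{n}\ell_{1};\ell_{p})$ is absolutely $(t;1)$-summing, and since every absolutely summing operator is in particular bounded, this is the equality $\mathcal{L}(^{n}\ell_{1};\ell_{p})=\mathcal{L}_{as(t;1)}(^{n}\ell_{1};\ell_{p})$, i.e. condition (a) of Lemma \ref{lll}. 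Invoking the implication (a)$\Rightarrow$(b) of that lemma with $F=\ell_{p}$ then yields exactly $\mathcal{L}(^{m}\ell_{1};\ell_{p})=\mathcal{L}_{\mathcal{I},as(t;1)}(^{m}\ell_{1};\ell_{p})$, which is the desired identity.

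For the optimality when $2\le p\le\infty$, I would argue by contraposition using the equivalence (a)$\iff$(b). Fix any $r$ with $0<r<\tfrac{2p}{np+2}=t$. The construction carried out in Section 3, performed with $n$ factors in place of $m$, produces $n$-linear operators $T_{N}\in\mathcal{L}(^{n}\ell_{1};\ell_{p})$ with $\Vert T_{N}\Vert=1$ whose absolutely $(r;1)$-summing norms $\pi_{r,1}(T_{N})$ tend to infinity (the relevant constraint $nr(p/r)^{\ast}<2$ is precisely equivalent to $r<\tfrac{2p}{np+2}$); by the open mapping theorem this forces $\mathcal{L}(^{n}\ell_{1};\ell_{p})\neq\mathcal{L}_{as(r;1)}(^{n}\ell_{1};\ell_{p})$, that is, condition (a) fails for this $r$. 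By Lemma \ref{lll}, condition (b) then also fails, so $\mathcal{L}(^{m}\ell_{1};\ell_{p})\neq\mathcal{L}_{\mathcal{I},as(r;1)}(^{m}\ell_{1};\ell_{p})$ for every $r<t$. Hence $t$ cannot be lowered, which is the claimed optimality.

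Most of the real content sits in Lemma \ref{lll} and in the Dvoretzky--Rogers argument of Section 3, so I do not anticipate a serious analytic obstacle. The main point to verify carefully is organizational rather than technical: that the integer $n=\min\{|I_{1}|,\dots,|I_{k}|\}$ governing the block structure is simultaneously the number of factors for which the Kwapie\'{n}-type estimate (\ref{sss}) holds (giving the inclusion) and for which the lower-bound construction of Section 3 applies verbatim (giving sharpness), and that substituting $m\mapsto n$ in (\ref{sss}) reproduces the two branches of $t$ exactly as stated.
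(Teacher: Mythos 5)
Your proposal is correct and takes essentially the same route as the paper: the paper's own proof is exactly the combination of the estimate (\ref{sss}) (read with $n$ factors, which yields condition (a) of Lemma \ref{lll}) with the implication (a)$\Rightarrow$(b) of that lemma, plus the Section 3 construction for optimality. Your write-up only adds details the paper leaves implicit, namely the contrapositive use of the equivalence in Lemma \ref{lll} and the verification that $nr\left(p/r\right)^{\ast}<2$ is equivalent to $r<\tfrac{2p}{np+2}$, both of which are correct.
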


\begin{proof}
The result follows from a combination of (\ref{sss}) and Lemma \ref{lll}. The
optimality follows from what we have just proved in Section 3.
\end{proof}

When $k=1$ we recover Kwapie\'{n}'s Theorem for absolutely summing multilinear
operators and when $k=m$ we recover Kwapie\'{n}'s Theorem for multiple summing
operators. In the special case $p=2$ we obtain a unified Grothendieck's theorem:

\begin{corollary}
[Unified Grothendieck's Theorem]Let $\mathcal{I}=\left\{  I_{1},\ldots
,I_{k}\right\}  $ be a family of pairwise disjoint non-void subsets of
$\{1,\ldots,m\}$ such that $\cup_{j=1}^{k}I_{j}=\{1,\ldots,m\}$. If
$n=\min\left\{  \left\vert I_{1}\right\vert ,\ldots,\left\vert I_{k}%
\right\vert \right\}  $ then we have%
\[
\mathcal{L}\left(  ^{m}\ell_{1};\ell_{2}\right)  =\mathcal{L}_{\mathcal{I}%
,as\left(  \frac{2}{n+1};1\right)  }\left(  ^{m}\ell_{1};\ell_{2}\right)
\]
and the result is optimal.
\end{corollary}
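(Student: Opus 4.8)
The plan is to obtain this statement as the $p=2$ specialization of the preceding \emph{Kwapie\'{n}'s Theorem for blocks}, so almost no new work is required. First I would substitute $p=2$ into both branches of the parameter $t$ appearing in that theorem and check that they agree and collapse to $2/(n+1)$. In the range $1\leq p\leq2$ one has $t=\frac{2p}{np+2p-2}$, which at $p=2$ gives $\frac{4}{2n+4-2}=\frac{4}{2n+2}=\frac{2}{n+1}$; in the range $2\leq p\leq\infty$ one has $t=\frac{2p}{np+2}$, which at $p=2$ gives $\frac{4}{2n+2}=\frac{2}{n+1}$ as well. Thus the two piecewise formulas are consistent at the shared endpoint $p=2$, and feeding this value into the theorem yields at once the identity $\mathcal{L}(^{m}\ell_{1};\ell_{2})=\mathcal{L}_{\mathcal{I},as(\frac{2}{n+1};1)}(^{m}\ell_{1};\ell_{2})$.

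For the optimality claim I would simply note that $p=2$ lies inside the closed range $2\leq p\leq\infty$ for which the preceding theorem already asserts that $t$ is optimal. Consequently no separate argument is needed: the sharpness of $\frac{2}{n+1}$ is inherited verbatim from the optimality clause of the theorem, which itself traces back to the construction carried out in Section 3 (the operators $T_{n}$ and the Dvoretzky--Rogers obstruction). Here one only has to confirm that $p=2$ is covered by, rather than excluded from, that optimality range, which it is.

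Since the corollary is a direct specialization, I do not anticipate any genuine obstacle. The only points requiring care are the elementary arithmetic verification that the two definitions of $t$ coincide at the common boundary value $p=2$, and the observation that this boundary value falls within the interval on which optimality has been established. Both are immediate, so the proof reduces to recording these two remarks.
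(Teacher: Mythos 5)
Your proposal is correct and is precisely the paper's own (implicit) argument: the corollary is stated as the $p=2$ specialization of Kwapie\'{n}'s Theorem for blocks, where both branches of $t$ indeed reduce to $\frac{2}{n+1}$, and the optimality claim is inherited because $p=2$ lies in the range $2\leq p\leq\infty$ covered by the construction of Section 3. Nothing further is needed.
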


\section{Other variants of Kwapie\'{n}'s theorem}

In this final section we present partial answers to Problem \ref{p112233}. Of course, using the Inclusion Theorem (Theorem \ref{9990}), provided that
$1\leq u<\frac{2mn}{2mn-1}$, we can prove that $T\left(  A_{1},...,A_{m}\right)
$ is multiple $\left(  t,u\right)  $-summing for a certain $t.$ However, the
following result provides better estimates for other choices of $u:$

\begin{theorem}
Let $T\in\mathcal{L}\left(  ^{m}\ell_{1};\ell_{p}\right)  $ and $A_{k}%
\in\mathcal{L}\left(  ^{n}\ell_{\infty};\ell_{1}\right)  $ for all
$k=1,...,m.$ Then the composition $T\left(  A_{1},...,A_{m}\right)  $ is
multiple $\left(  t,u\right)  $-summing in the following cases:

(i) For $\left(  p,u\right)  \in\lbrack1,2]\times\lbrack\frac{2n}{n+1},2]$ and%
\[
t=\frac{2pu}{4p+2u-pu-4};
\]

(ii) For $\left(  p,u\right)  \in\lbrack2,\infty]\times\lbrack\frac{2n}%
{n+1},2]$ and
\[
t=\frac{2pu}{pu-2u+4};
\]

(iii) For $\left(  p,u\right)  \in\lbrack1,2]\times\lbrack1,\frac{2n}{n+1}]$
and
\[
t=\frac{2np}{2p+np-2};
\]

(iv) For $\left(  p,u\right)  \in\lbrack2,\infty]\times\lbrack1,\frac{2n}%
{n+1}]$ and
\[
t=\frac{2np}{np+2}.
\]

\end{theorem}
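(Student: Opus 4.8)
The plan is to split the multiple-summing estimate for the $mn$-linear composition $S:=T(A_1,\dots,A_m)\colon{}^{mn}\ell_\infty\to\ell_p$ across its two layers and then to close the remaining range of $u$. Writing the $mn$ arguments as $x^{(k,i)}$ with $k\in\{1,\dots,m\}$, $i\in\{1,\dots,n\}$, I abbreviate $y^{(k)}_{\mathbf j_k}:=A_k(x^{(k,1)}_{j_{k,1}},\dots,x^{(k,n)}_{j_{k,n}})\in\ell_1$, so that $S(\dots)=T(y^{(1)}_{\mathbf j_1},\dots,y^{(m)}_{\mathbf j_m})$. The engine is a composition estimate: if $T$ is multiple $(t,b)$-summing and every scalar $n$-linear form $\varphi\circ A_k$ on $\ell_\infty$ (with $\varphi\in B_{(\ell_1)^\ast}$) is multiple $(b,u)$-summing uniformly in $\varphi$, then $S$ is multiple $(t,u)$-summing. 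Indeed, applying the multiple $(t,b)$-summability of $T$ to the families $(y^{(k)}_{\mathbf j_k})_{\mathbf j_k}$ gives
\[
\Big(\textstyle\sum_{\mathbf j}\|S(\dots)\|_{\ell_p}^{t}\Big)^{1/t}\le \pi_{t,b}(T)\prod_{k=1}^{m}\big\|(y^{(k)}_{\mathbf j_k})_{\mathbf j_k}\big\|_{w,b},
\]
and since $\|(y^{(k)}_{\mathbf j_k})\|_{w,b}=\sup_{\varphi}\big(\sum_{\mathbf j_k}|(\varphi\circ A_k)(x^{(k,\cdot)})|^{b}\big)^{1/b}$, the form-summability of $\varphi\circ A_k$ bounds each factor by $\prod_i\|(x^{(k,i)}_{j})_j\|_{w,u}$. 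The crucial point is that $T$ contributes a \emph{product over the $m$ blocks}, each block being resolved by an $n$-linear form estimate, so the exponents $t,u$ carry \emph{no} dependence on $m$ (only the constant acquires an $m$-th power). This is precisely why the Inclusion Theorem applied directly to the $mn$-linear $S$, as noted before the statement, is wasteful.

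With this reduction I would assemble two ingredients: (i) Kwapie\'n's multilinear theorem for $T\in\mathcal L({}^m\ell_1;\ell_p)$ in its form with general second index, namely that $T$ is multiple $(\tau(b),b)$-summing with the $m$-independent exponent $\tau(b)=\frac{2pb}{4p+2b-pb-4}$ for $1\le p\le2$ and $\tau(b)=\frac{2pb}{pb-2b+4}$ for $2\le p\le\infty$ (at $b=1$ this is the multiple summing statement following from \cite[Corollary 4.3]{nachr}); and (ii) the Bohnenblust--Hille-type multiple-summing of $n$-linear forms on $\ell_\infty$: every continuous scalar $n$-linear form on $\ell_\infty$ is multiple $(\tfrac{2n}{n+1},1)$-summing, and in fact multiple $(\tfrac{2n}{n+1},\tfrac{2n}{n+1})$-summing, hence multiple $(\tfrac{2n}{n+1},u)$-summing for every $u\le\tfrac{2n}{n+1}$. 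The four cases then arise from choosing the intermediate exponent optimally as $b=\max\{u,\tfrac{2n}{n+1}\}$. For $u\le\tfrac{2n}{n+1}$ the smallest admissible $b$ is the Bohnenblust--Hille exponent $\tfrac{2n}{n+1}$, and since $\tau$ is increasing this yields the \emph{constant} exponents $t=\tau(\tfrac{2n}{n+1})=\tfrac{2np}{2p+np-2}$ and $t=\tfrac{2np}{np+2}$ of cases (iii) and (iv); an arithmetic check confirms these are the $u=1$ values of Theorem \ref{dd22}(A), as they must be. For $u\ge\tfrac{2n}{n+1}$ the constraint $b\ge u$ forces $b=u$, giving $t=\tau(u)$, which is exactly cases (i) and (ii).

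For the range $u\in[\tfrac{2n}{n+1},2]$ (cases (i), (ii)) I would, as an alternative that avoids proving the full strengthened form-summing for every intermediate $u$, interpolate between two anchors. The first is the endpoint just obtained, $S$ multiple $(t_0,\tfrac{2n}{n+1})$-summing with $t_0$ the constant above. The second is $u=2$, where $S$ is multiple $(2,2)$-summing by an elementary argument independent of $p$: each $A_k$ is multiple $(2,2)$-summing into the cotype-$2$ space $\ell_1$ (by \cite[Corollary 4.10]{complu}), so $\big(\sum_{\mathbf j_k}\|y^{(k)}_{\mathbf j_k}\|_{\ell_1}^2\big)^{1/2}\le\pi_{2,2}(A_k)\prod_i\|(x^{(k,i)}_j)_j\|_{w,2}$, and the crude bound $\|T(y_1,\dots,y_m)\|\le\|T\|\prod_k\|y_k\|_{\ell_1}$ lets the sum over the product index set factor over the blocks, giving $\pi_{2,2}(S)\le\|T\|\prod_k\pi_{2,2}(A_k)$. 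Interpolating these two anchors linearly in the reciprocals $(1/u,1/t)$ reproduces exactly $t=\tfrac{2pu}{4p+2u-pu-4}$ and $t=\tfrac{2pu}{pu-2u+4}$; I have verified that both formulas match the constant endpoint at $u=\tfrac{2n}{n+1}$ and equal $2$ at $u=2$, so the interpolation is consistent.

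The main obstacle I anticipate lies in the composition estimate together with its two analytic inputs, not in the (routine) interpolation. Concretely, the delicate steps are verifying that multilinear forms on $\ell_\infty$ enjoy the multiple $(\tfrac{2n}{n+1},u)$-summability needed to push the second index of the $A_k$-layer up to $\tfrac{2n}{n+1}$ (the case $u=1$ is classical, but $u>1$ is a genuine strengthening), and securing the multilinear Kwapie\'n estimate for $T$ with the correct $m$-independent exponent $\tau(b)$ at the matching second index $b$. Once these are in place, the bookkeeping of the weak-norm factors across the $m$ blocks is straightforward, and the only remaining analytic input is a Hölder argument for the multiple-summing norms in the interpolation step.
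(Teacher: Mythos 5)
Your composition lemma is sound (it is exactly what the paper's phrase ``weakly multiple summing'' encodes), your $(2,2)$ anchor is the same elementary argument the paper uses, and your exponent bookkeeping is consistent. But both routes you offer have a genuine gap. Of your two anticipated obstacles, the Bohnenblust--Hille-type strengthening for the forms $\varphi\circ A_k$ is not the problem: that is precisely the ``weakly multiple $(\frac{2n}{n+1},u)$-, resp.\ $(u,u)$-summing'' statement from \cite{lama11} which the paper quotes. The fatal ingredient is the other one: the claim that every $T\in\mathcal{L}(^{m}\ell_{1};\ell_{p})$ is multiple $(\tau(b),b)$-summing for $b\in[\frac{2n}{n+1},2]$. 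You never prove it (it is of the same depth as the theorem itself), and it is in fact \emph{false} at $p=\infty$ for every $b>1$, hence on essentially the whole range you need once $n\geq2$. Indeed, for $m=1$ it would say every operator $\ell_{1}\to\ell_{\infty}$ is $(2,b)$-summing. Take an isometric embedding $J:\ell_{1}\to\ell_{\infty}$ and $x_{j}=\frac{1}{N}\sum_{i=1}^{N}\varepsilon_{ij}e_{i}$ with $(\varepsilon_{ij})$ an $N\times N$ Hadamard matrix: then $\Vert Jx_{j}\Vert_{\infty}=\Vert x_{j}\Vert_{1}=1$, while orthogonality of the rows gives $\Vert(x_{j})_{j=1}^{N}\Vert_{w,b}\leq N^{1/b-1/2}$, so $\bigl(\sum_{j}\Vert Jx_{j}\Vert^{2}\bigr)^{1/2}=N^{1/2}$ cannot be $O(N^{1/b-1/2})$ when $b>1$; tensoring $J$ with functionals produces $m$-linear counterexamples. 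This is not a blemish at one corner: it shows your reduction is lossy. By converting the $A_{k}$-layer into \emph{weak} summability of the intermediate sequences and then demanding genuine multiple summability from $T$, you discard exactly what makes the theorem true at $p\in\{1,\infty\}$ and at $u=2$, namely that the $A_{k}$ (multiple $(2,2)$-summing into the cotype-$2$ space $\ell_{1}$) hand \emph{strongly} $2$-summable sequences to a merely bounded $T$ --- the very mechanism of your own $(2,2)$ anchor.

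The fallback for (i)--(ii), interpolating the two anchors ``linearly in the reciprocals $(1/u,1/t)$,'' is not a licensed operation: there is no interpolation theorem for the summability indices of a fixed operator, and the H\"older argument you allude to runs the wrong way. Log-convexity gives $\Vert(x_{j})\Vert_{w,u_{\theta}}\leq\Vert(x_{j})\Vert_{w,u_{0}}^{1-\theta}\Vert(x_{j})\Vert_{w,2}^{\theta}$, whereas you would need to dominate $\Vert(x_{j})\Vert_{w,u_{0}}$ by $\Vert(x_{j})\Vert_{w,u_{\theta}}$ --- impossible, since a weakly $u_{\theta}$-summable sequence need not be weakly $u_{0}$-summable at all when $u_{0}<u_{\theta}$. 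Note also that along your segment the difference $1/u-1/t$ \emph{decreases}, while the only available inclusion result (Theorem \ref{9990}) requires it to increase and carries an $mn$ penalty, so it cannot reach those points either. The paper closes the gap in the orthogonal direction: it fixes the second index $u$, establishes the endpoints $p=1$ and $p=2$ for (i), (iii), and $p=2$ and $p=\infty$ for (ii), (iv) (using \cite{lama11} at $p=2$, and the strong-summability argument at $p\in\{1,\infty\}$), and then interpolates the \emph{target space} $\ell_{p}=[\ell_{1},\ell_{2}]_{\theta}$, resp.\ $[\ell_{2},\ell_{\infty}]_{\theta}$, via the complex interpolation theorem for multiple summing operators of \cite{complu}, following \cite[Theorem 2.3]{bayart}; that tool keeps $u$ fixed and interpolates only the first index together with the range space, which is exactly why the endpoint computations must be taken in $p$ rather than in $u$. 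To repair your argument you would have to do the same.
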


\begin{proof}
(i) We proceed as in the proof of \cite[Theorem 2.3]{bayart}.

Note that
\begin{align*}
\widehat{T}\left(  \widehat{A_{1}}\left(  \ell_{u}^{w}(\ell_{\infty})\right)
,...,\widehat{A_{n}}\left(  \ell_{u}^{w}(\ell_{\infty})\right)  \right)   &
\in\ell_{2}\left(  \ell_{1}\right) \\
\widehat{T}\left(  \widehat{A_{1}}\left(  \ell_{u}^{w}(\ell_{\infty})\right)
,...,\widehat{A_{n}}\left(  \ell_{u}^{w}(\ell_{\infty})\right)  \right)   &
\in\ell_{u}\left(  \ell_{2}\right).
\end{align*}
Let us consider $p=1.$ The operators $A_{1},...,A_{n}\in\mathcal{L}\left(
^{n}\ell_{\infty};\ell_{1}\right)  $ are multiple $\left(  2,2\right)
$-summing (see \cite{lama11}) and thus $T(A_{1},...,A_{m})$ is multiple
$\left(  2,2\right)  $-summing when $T\in\mathcal{L}\left(  ^{m}\ell_{1}%
;\ell_{1}\right)  $ and, \textit{a fortiori}, $T(A_{1},...,A_{m})$ is multiple
$\left(  2,u\right)  $-summing.

If $p=2,$ as $\frac{2n}{n+1}\leq u,$ the operators $A_{1},...,A_{n}%
\in\mathcal{L}\left(  ^{n}\ell_{\infty};\ell_{1}\right)  $ are weakly multiple
$\left(  u,u\right)  $-summing (see \cite{lama11}) and, since $u\leq2,$ it is
well-known that $T\in\mathcal{L}\left(  ^{m}\ell_{1};\ell_{2}\right)  $ is
multiple $\left(  u,u\right)  $-summing. Hence $T(A_{1},...,A_{m})$ is
multiple $\left(  u,u\right)  $-summing.

Proceeding as in \cite[Theorem 2.3]{bayart} we have that%
\[
\widehat{T}\left(  \widehat{A_{1}}\left(  \ell_{u}^{w}(\ell_{\infty})\right)
,...,\widehat{A_{n}}\left(  \ell_{u}^{w}(\ell_{\infty})\right)  \right)
\in\ell_{t}\left(  \ell_{p}\right)
\]
for
\[
\frac{1}{t}=\frac{\frac{2-p}{p}}{2}+\frac{1-\frac{2-p}{p}}{_{u}}.
\]

We thus have%

\[
t=\frac{2pu}{4p+2u-pu-4}.
\]

(ii) Note that, as in the first case,
\begin{align*}
\widehat{T}\left(  \widehat{A_{1}}\left(  \ell_{u}^{w}(\ell_{\infty})\right)
,...,\widehat{A_{n}}\left(  \ell_{u}^{w}(\ell_{\infty})\right)  \right)   &
\in\ell_{2}\left(  \ell_{\infty}\right) \\
\widehat{T}\left(  \widehat{A_{1}}\left(  \ell_{u}^{w}(\ell_{\infty})\right)
,...,\widehat{A_{n}}\left(  \ell_{u}^{w}(\ell_{\infty})\right)  \right)   &
\in\ell_{u}\left(  \ell_{2}\right).
\end{align*}

Thus%
\[
\widehat{T}\left(  \widehat{A_{1}}\left(  \ell_{u}^{w}(\ell_{\infty})\right)
,...,\widehat{A_{n}}\left(  \ell_{u}^{w}(\ell_{\infty})\right)  \right)
\in\ell_{t}\left(  \ell_{p}\right)
\]
for%
\[
\frac{1}{t}=\frac{1-\frac{2}{p}}{2}+\frac{\frac{2}{p}}{_{u}},
\]
and thus%

\[
t=\frac{2pu}{pu-2u+4}.
\]

(iii) \ If $p=1,$ $T(A_{1},...,A_{m})$ is multiple $\left(  2,u\right)
$-summing (\cite[Proposition 3.3]{nachr}). If $p=2,$ the operators
$A_{1},...,A_{n}\in\mathcal{L}\left(  ^{n}\ell_{\infty};\ell_{1}\right)  $ are
weakly multiple $\left(  \frac{2n}{n+1},u\right)  $-summing (see
\cite{lama11}) and $T\in\mathcal{L}\left(  ^{m}\ell_{1};\ell_{2}\right)  $ is
multiple $\left(  \frac{2n}{n+1},\frac{2n}{n+1}\right)  $-summing. Thus
$T(A_{1},...,A_{m})$ is multiple $\left(  \frac{2n}{n+1},u\right)  $-summing.

Proceeding as in \cite{bayart}, for $T\in\mathcal{L}\left(  ^{m}\ell_{1}%
;\ell_{p}\right)  $ we have that $T(A_{1},...,A_{m})$ is multiple $\left(
t,u\right)$-summing for%
\[
t=\frac{2np}{2p+np-2}.%
\]

(iv) If $p=\infty,$ the operator $T(A_{1},...,A_{m})$ is multiple $\left(
2,2\right)  $-summing, because $A_{1},...,A_{m}$ are multiple $\left(
2,2\right)  $-summing. (\cite[Proposition 3.3]{nachr}). Thus $T(A_{1}%
,...,A_{m})$ is multiple $\left(  2,u\right)  $-summing

If $p=2,$ as in the previous case we know that $T(A_{1},...,A_{m})$ is
multiple $\left(  \frac{2n}{n+1},u\right)  $-summing.

Proceeding as in \cite{bayart}, for $T\in\mathcal{L}\left(  ^{m}\ell_{1}%
;\ell_{p}\right)  $ we have that $T(A_{1},...,A_{m})$ is multiple $\left(
t,u\right)  $-summing for%
\[
t=\frac{2np}{np+2}.%
\]

\end{proof}

Our final result extends (2) of Theorem \ref{dd22}:

\begin{theorem}
Let $T\in\mathcal{L}\left(  ^{m}\ell_{1};\ell_{p}\right)  $ and $A_{k}%
\in\mathcal{L}\left(  ^{n}\ell_{\infty};\ell_{1}\right)  $ for all
$k=1,...,m.$ Assume that $n\geq2.$ If $s\geq1,$ the composition $T\left(
A_{1},...,A_{m}\right)  $ is absolutely $\left(  r;2s\right)  $-summing for%
\[
r=\left\{
\begin{array}
[c]{c}%
\frac{2ps}{2mp-2s+2ps-mps}\text{ if }1\leq p\leq2\\
\frac{2ps}{2mp+2s-mps}\text{ if }2\leq p\leq\infty.
\end{array}
\right.
\]

\end{theorem}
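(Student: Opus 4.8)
The plan is to follow the scheme of part (B) of Theorem~\ref{dd22} — that is, the case $s=1$ — propagating the index $2s$ through the whole argument, and to organise the proof around the three target endpoints $\ell_{1},\ell_{2},\ell_{\infty}$, which are interpolated at the end to cover an arbitrary $\ell_{p}$. First I would linearise by means of Proposition~\ref{lin}. Since the projective tensor product of copies of $\ell_{1}$ is again isometric to $\ell_{1}$, the operator $T$ is identified with a bounded linear map $\widehat{T}\colon\ell_{1}\to\ell_{p}$, and each $A_{k}$ with a linear $\widehat{A_{k}}\colon\widehat{\otimes}^{\pi}_{n}\ell_{\infty}\to\ell_{1}$. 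Writing $y^{(k)}_{j}=A_{k}(x^{(k,1)}_{j},\dots,x^{(k,n)}_{j})$ and $w_{j}=y^{(1)}_{j}\otimes\cdots\otimes y^{(m)}_{j}$ for the diagonal evaluation (all $mn$ slots carrying the common index $j$), the statement becomes the assertion that $(\widehat{T}w_{j})_{j}$ lies in $\ell_{r}(\ell_{p})$ with norm controlled by $\prod_{k,i}\Vert(x^{(k,i)}_{j})_{j}\Vert_{w,2s}$.

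The first ingredient is a diagonal summability estimate for the factors $A_{k}$. Since $n\geq2$ and $s\geq1$ we have $2s\geq2\geq\tfrac{2n}{n+1}$, so the sharp summability of $n$-linear operators from $\ell_{\infty}$ into the cotype-$2$ space $\ell_{1}$ (see \cite{lama11,bayart}) is available: feeding weakly $2s$-summable sequences and restricting to the diagonal, one gets $(y^{(k)}_{j})_{j}\in\ell_{\sigma}(\ell_{1})$ with $\tfrac{1}{\sigma}=\tfrac{1}{s}-\tfrac{1}{2}$ and norm bounded by $\prod_{i=1}^{n}\Vert(x^{(k,i)}_{j})_{j}\Vert_{w,2s}$. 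A generalised Hölder inequality over $j$ then gives $(w_{j})_{j}\in\ell_{\tau}(\ell_{1})$ with $\tfrac{1}{\tau}=\tfrac{m}{\sigma}=m\bigl(\tfrac1s-\tfrac12\bigr)$, where $\tau=\sigma/m$.

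Next I would push $(w_{j})_{j}$ through $\widehat{T}$ at the three endpoints. If $\widehat{T}$ maps into $\ell_{1}$ or $\ell_{\infty}$ it is merely bounded, so $(\widehat{T}w_{j})_{j}\in\ell_{\tau}(\ell_{1})$ and $\in\ell_{\tau}(\ell_{\infty})$ with the same exponent $\tfrac1\tau=m(\tfrac1s-\tfrac12)$. If $\widehat{T}$ maps into $\ell_{2}$ I would invoke the mixing form of Grothendieck's theorem for operators $\ell_{1}\to\ell_{2}$ (equivalently, Kwapie\'n's theorem at $p=2$), which converts $\ell_{\tau}(\ell_{1})$-control of the inputs into $\ell_{b}(\ell_{2})$-control of the outputs with the cotype-$2$ gain $\tfrac12$, giving $\tfrac1b=\tfrac1\tau+\tfrac12=m(\tfrac1s-\tfrac12)+\tfrac12$; as in Lemma~\ref{lll} this is where Grothendieck's constant $K_{G}$ enters the final constant.

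It remains to interpolate the vector-valued sequence spaces. Writing $\ell_{p}=[\ell_{1},\ell_{2}]_{\theta}$ for $1\leq p\leq2$ and $\ell_{p}=[\ell_{2},\ell_{\infty}]_{\theta}$ for $2\leq p\leq\infty$, the three endpoint memberships interpolate (as in \cite{bayart}) to $(\widehat{T}w_{j})_{j}\in\ell_{r}(\ell_{p})$ with $\tfrac1r=m(\tfrac1s-\tfrac12)+\tfrac1{p^{\ast}}$ in the first range and $\tfrac1r=m(\tfrac1s-\tfrac12)+\tfrac1p$ in the second; a direct computation identifies these with the stated values of $r$ and shows that $s=1$ recovers part (B) of Theorem~\ref{dd22}. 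The hard part will be the first ingredient: securing the diagonal estimate for the $A_{k}$ with the bilinear exponent $\sigma=\tfrac{2s}{2-s}$ \emph{uniformly in} $n\geq2$ — rather than the weaker, $n$-dependent value that a routine inclusion-theorem argument would give — which is precisely what makes the final exponent $r$ independent of $n$; the remaining bookkeeping, in particular the propagation of $K_{G}$ through the Grothendieck step, is routine.
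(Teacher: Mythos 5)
Your route is genuinely different from the paper's, but it breaks at its central step, the $\ell_2$-endpoint. You claim that a bounded operator $\widehat{T}\colon\ell_1\to\ell_2$ converts strong $\ell_\tau(\ell_1)$-control of the diagonal tensors $(w_j)_j$ into $\ell_b(\ell_2)$-control with $\frac1b=\frac1\tau+\frac12$. No such statement is true: Grothendieck's and Kwapie\'n's theorems improve summability only of \emph{weakly} summable sequences, and no bounded operator can improve the exponent of a \emph{strongly} summable one. Concretely, take $\widehat{T}$ to be the natural inclusion $\ell_1\hookrightarrow\ell_2$ and $w_j=a_je_j$; then $\Vert\widehat{T}w_j\Vert_{2}=\Vert w_j\Vert_{1}=|a_j|$, so from $(w_j)_j\in\ell_\tau(\ell_1)$ you get $\ell_\tau(\ell_2)$ and nothing better. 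The failure is structural: by compressing all hypotheses into the single strong estimate $(w_j)_j\in\ell_\tau(\ell_1)$, you have discarded the weak summability of the factor sequences $(y^{(k)}_j)_j$, which is exactly the information any Grothendieck-type gain feeds on. Two further gaps: (1) your "first ingredient" (the estimate $(y^{(k)}_j)_j\in\ell_\sigma(\ell_1)$ with $\frac1\sigma=\frac1s-\frac12$ uniformly in $n$) is explicitly deferred as "the hard part", so the proposal is incomplete at the step you yourself call crucial — though this one is fillable: coordinatewise application of \cite[Theorem 3.15]{complu} gives $(y^{(k)}_j)_j\in\ell^w_s(\ell_1)$, and since $\ell_1$ has cotype $2$ its identity is $(2;1)$-summing, hence $\left(\frac{2s}{2-s};s\right)$-summing by the linear inclusion theorem; (2) the interpolation step is unjustified as stated, because a fixed $T\in\mathcal{L}\left(^m\ell_1;\ell_p\right)$ does not map into $\ell_1$ and $\ell_2$ simultaneously, so your three "endpoint memberships" concern different hypothetical operators and there is no single sequence lying in two spaces to interpolate; the argument in \cite{bayart} is delicate precisely on this point.

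For contrast, the paper's proof is a short composition argument with no linearization and no interpolation: by \cite[Theorem 3.15]{complu} each coordinate functional composed with $A_k$ is absolutely $(s;2s)$-summing, so each $A_k$ maps weakly $2s$-summable sequences to \emph{weakly} $s$-summable sequences in $\ell_1$; by (\ref{sss}) the operator $T$ is absolutely $\left(\frac{2p}{mp+2p-2};1\right)$-summing for $1\leq p\leq 2$ (resp. $\left(\frac{2p}{mp+2};1\right)$ for $2\leq p\leq\infty$), hence absolutely $(r;s)$-summing by the Inclusion Theorem, with $r$ exactly the exponent in the statement; composing the two facts yields that $T(A_1,\dots,A_m)$ is absolutely $(r;2s)$-summing. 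Your final numerology, $\frac1r=m\left(\frac1s-\frac12\right)+\frac1{p^{\ast}}$ for $p\leq2$ and $\frac1r=m\left(\frac1s-\frac12\right)+\frac1p$ for $p\geq2$, does agree with the theorem, but the path you propose cannot reach it; to salvage your scheme you should keep the weak $s$-summability of the $(y^{(k)}_j)_j$ and feed it to the summing property of $T$, which is precisely what the paper does.
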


\begin{proof}
By \cite[Theorem 3.15]{complu} we know that every operator in $\mathcal{L}%
\left(  ^{n}\ell_{\infty};\mathbb{K}\right)  $ is absolutely $\left(
s;2s\right)  $-summing for all $s\geq1$ and hence every $A_{k}$ is weakly
absolutely $\left(  s;2s\right)  $-summing.

Let us suppose $1\leq p\leq2.$ Since every $T\in\mathcal{L}\left(  ^{m}%
\ell_{1};\ell_{p}\right)  $ is absolutely $\left(  \frac{2p}{mp+2p-2};1\right)
$-summing, by the Inclusion Theorem we conclude that every $T\in
\mathcal{L}\left(  ^{m}\ell_{1};\ell_{p}\right)  $ is absolutely $\left(
\frac{2ps}{2mp-2s+2ps-mps};s\right)  $-summing. Thus, $T\left(  A_{1}%
,...,A_{m}\right)  $ is absolutely $\left(  \frac{2ps}{2mp-2s+2ps-mps}%
;2s\right)  $-summing.

If $2\leq p\leq\infty,$ since every $T\in\mathcal{L}\left(  ^{m}\ell_{1}%
;\ell_{p}\right)  $ is absolutely $\left(  \frac{2p}{mp+2};1\right)  $-summing,
by the Inclusion Theorem we conclude that every $T\in\mathcal{L}\left(
^{m}\ell_{1};\ell_{p}\right)  $ is absolutely $\left(  \frac{2ps}%
{2mp-2s+mps};s\right)  $-summing. Thus, $T\left(  A_{1},...,A_{m}\right)  $ is
absolutely $\left(  \frac{2ps}{2mp-2s+mps};2s\right)  $-summing.
\end{proof}

\end{document}